\colorlet{darkblue}{blue!55!black}
\colorlet{darkcyan}{cyan!50!black}
\colorlet{darkgreen}{green!60!black}
\def\eqref#1{\textcolor{darkblue}{(\ref{#1})}}
\Crefname{equation}{Diagram}{Diagrams}
\crefname{equation}{Equation}{Equations}
\let\oldequation\equation
\let\oldendequation\endequation
\renewenvironment{equation}{\linenomathNonumbers\oldequation}{\oldendequation\endlinenomath}
\let\expandafter\oldequationstar\csname equation*\endcsname
\let\expandafter\oldendequationstar\csname endequation*\endcsname
\renewenvironment{equation*}{\linenomathNonumbers\oldequationstar}{\oldendequationstar\endlinenomath}
\let\oldalign\align
\let\oldendalign\endalign
\let\expandafter\oldalignstar\csname align*\endcsname
\let\expandafter\oldendalignstar\csname endalign*\endcsname
\renewenvironment{align*}{\linenomathNonumbers\oldalignstar}{\oldendalignstar\endlinenomath}
\theoremstyle{plain}
\newtheorem{theorem}{Theorem}[section]
\newtheorem{lemma}[theorem]{Lemma}
\newtheorem{corollary}[theorem]{Corollary}
\newtheorem{proposition}[theorem]{Proposition}
\theoremstyle{definition}
\newtheorem{definition}[theorem]{Definition}
\newtheorem{example}[theorem]{Example}
\newtheorem{remark}[theorem]{Remark}
\newtheorem*{ack}{Acknowledgments}
\numberwithin{equation}{section}
\numberwithin{theorem}{section}
\title{Categorical characterizations of regularity for algebraic stacks}
\author[T.~De Deyn]{Timothy De Deyn}
\address{T.~De Deyn,
Max Planck Institute for Mathematics,
Bonn, Germany}
\email{dedeyn@mpim-bonn.mpg.de}
\author[P.~Lank]{Pat Lank}
\address{P.~Lank,
Dipartimento di Matematica “F. Enriques”, Universit\`{a} degli Studi di Milano, Via Cesare
Saldini 50, 20133 Milano, Italy}
\email{plankmathematics@gmail.com}
\author[K.~Manali Rahul]{Kabeer Manali Rahul}
\address{K.~Manali Rahul,
Max Planck Institute for Mathematics,
Bonn, Germany}
\email{kabeermr.maths@gmail.com}
\author[F. ~Peng]{Fei Peng}
\address{F.~Peng,
School of Mathematics \& Statistics\\
The University of Melbourne\\
Parkville, VIC, 3010\\
Australia}
\email{pengf2@student.unimelb.edu.au}
\date{\today}
\keywords{algebraic stacks, regularity, regular locus, derived categories, generation, (bounded) $t$-structures}
\subjclass[2020]{14A30 (primary), 14F08, 14A20, 14D23} 
\begin{document}
    
\begin{abstract}
    For a Noetherian scheme $X$ of finite Krull dimension, Neeman recently established two characterizations of the regularity of $X$ using strong generators and bounded $t$-structures on $\operatorname{Perf}(X)$. In this note, we obtain variants of Neeman's results for large classes of Noetherian algebraic stacks. An important intermediate step is the fact that $X$ is regular if and only if $\operatorname{Perf}(X)=D_{\operatorname{coh}}^b(X)$, which we establish for Noetherian algebraic stacks. Our approach also yields a criterion for the existence of classical generators for the bounded derived categories of coherent sheaves on algebraic stacks, generalizing previous results for commutative rings and schemes.
\end{abstract}

\maketitle

\tableofcontents

\section{Introduction}
\label{sec:intro}

Let $X$ be a Noetherian scheme and denote by $D_{\operatorname{qc}}(X)$ the unbounded derived category of $\mathcal{O}_X$-modules with quasi-coherent cohomology sheaves. It contains two subcategories that are closely related to the geometry of $X$, namely the category of perfect complexes $\operatorname{Perf}(X)$ and the bounded derived category of coherent sheaves $D^b_{\operatorname{coh}}(X)$ on $X$. In particular, it is known that properties of $\operatorname{Perf}(X)$ as a triangulated category can be used to characterize the regularity of $X$.
The main goal of this article is to extend these categorical characterizations of regularity from schemes to algebraic stacks.

Recall that a triangulated category $\mathcal{T}$ admits a \textit{strong generator} if there exists an object $G \in \mathcal{T}$ such that every object of $\mathcal{T}$ can be obtained from $G$ using finite coproducts, direct summands, shifts, and at most $n$ cones for some fixed integer $n$. It is shown in \cite[Theorem 3.1.4]{Bondal/VandenBergh:2003} that the category of perfect complexes on a smooth variety over a field admits a strong generator. This result was significantly generalized in \cite[Theorem 0.5]{Neeman:2021a} where it is proved that a separated Noetherian scheme $X$ of finite Krull dimension is regular if, and only if, $\operatorname{Perf}(X)$ admits a strong generator. An algebraic stack is said to be \textit{concentrated} if perfect complexes coincide with the compact objects (see \Cref{sec:prelim_algebraic_stacks_compacts}). 

Our first main result is a generalization of \cite[Theorem 0.5]{Neeman:2021a} to concentrated algebraic stacks.

\begin{theorem}
    \label{introthm:stacky_bondal_vandenbergh}
    Let $\mathcal{X}$ be a concentrated separated Noetherian algebraic stack with affine diagonal and of finite Krull dimension. 
    Then $\mathcal{X}$ is regular if, and only if, $\operatorname{Perf} (\mathcal{X})$ admits a strong generator.
\end{theorem}

\Cref{introthm:stacky_bondal_vandenbergh} is a consequence of a more general statement \Cref{thm:stacky_bondal_vandenbergh}. It is possible to relax the separated assumption in \Cref{introthm:stacky_bondal_vandenbergh} following \cite{Jatoba:2021}. We do not treat this in this article. The Noetherianity assumption, however, is necessary even in the scheme case. See \cite{Stevenson:2025} for a counterexample. Moreover, the existence of strong generators implies the finiteness of the Krull dimension in the Noetherian case. Therefore, the finiteness of the Krull dimension cannot be relaxed either. The concentrated assumption is also necessary. It is automatically satisfied for quasi-compact quasi-separated schemes, but this is no longer true in general for algebraic stacks (see e.g.\ \Cref{ex:bkga_concentrated_perf_fails}).

On the other hand, a result of Antieau--Gepner--Heller \cite{Antieau/Gepner/Heller:2019} identifies obstructions to the existence of bounded $t$-structures on stable $\infty$-categories in terms of negative $K$-theory. Furthermore, loc.\ cit.\ conjectured that a Noetherian scheme $X$ is regular if, and only if, $\operatorname{Perf}(X)$ admits a bounded $t$-structure. This conjecture was recently resolved by Neeman for Noetherian schemes of finite Krull dimension \cite{Neeman:2023}. Moreover, loc.\ cit.\ proves a relative form of this result: for a closed subset $Z \subseteq X$, writing $\operatorname{Perf}_Z(X)$ for the category of perfect complexes supported on $Z$, one has that $Z \subseteq \operatorname{reg}(X)$ if, and only if, $\operatorname{Perf}_Z(X)$ admits a bounded $t$-structure.

Our next result extends Neeman's theorem to suitably well-behaved algebraic stacks.

\begin{theorem}
    \label{introthm:stacky_neeman_bounded_t_structure}
    Let $\mathcal{X}$ be a concentrated Noetherian algebraic stack of finite Krull dimension that either    
    \begin{enumerate}
        \item has quasi-finite and separated diagonal or
        \item is a Deligne--Mumford stack of characteristic zero.
    \end{enumerate}
    Then for any closed $Z\subseteq |\mathcal{X}|$ one has $Z\subseteq\operatorname{reg}(\mathcal{X})$ if, and only if, $\operatorname{Perf}_Z (\mathcal{X})$ admits a bounded $t$-structure.
\end{theorem}

This appears later as \Cref{thm:stacky_neeman_bounded_t_structure}. See \Cref{def:regular_locus} for the definition of $\operatorname{reg}(\mathcal{X})$. While the proof of \Cref{introthm:stacky_neeman_bounded_t_structure} closely follows that of \cite{Neeman:2023}, substantial additional work is required to adapt it to the stack theoretic setting. The main technical obstruction is to show that, for $Z$ the complement of a quasi-compact open, the standard $t$-structure on $D_{\operatorname{qc},Z}(\mathcal{X})$ lies in the preferred equivalence class (see \Cref{prop:preferred_equivalence_classes}). Moreover, our argument also relies on the induction principle for algebraic stacks \cite{Hall/Rydh:2018}. In particular, our approach avoids the need to establish weak approximability (in the sense of \cite{Neeman:2022,Neeman:2023}) for $D_{\operatorname{qc},Z}(\mathcal{X})$, which may be of independent interest. We also wish to remark that \Cref{prop:preferred_equivalence_classes} fails miserably for stacks with infinite stabilizers such as $B\mathbb{G}_m$. This follows from the absence of the preferred equivalence class of $t$-structures generated by a countable family of compact objects.

In the scheme case, both characterizations mentioned earlier ultimately rely on the elementary but important fact that a Noetherian scheme is regular if, and only if, $\operatorname{Perf}(X)=D_{\operatorname{coh}}^{b}(X)$. We obtain the following variant of this classical fact for algebraic stacks as part of our argument for our main theorems. 

\begin{proposition}
    \label{intrcor:derived_characterization_regularity_stacks}
    Let $\mathcal{X}$ be a decent quasi-compact locally Noetherian algebraic stack. Then $\mathcal{X}$ is regular if, and only if, $\operatorname{Perf}(\mathcal{X})= D^b_{\operatorname{coh}}(\mathcal{X})$.
\end{proposition}

\Cref{intrcor:derived_characterization_regularity_stacks} is a consequence of a relative version \Cref{prop:closed_subset_regular_locus}. We expect \Cref{intrcor:derived_characterization_regularity_stacks} to be folklore among experts, although we were unable to find a reference in the literature. The forward implication is immediate, while the converse is more subtle. One issue is that perfect complexes on an algebraic stack $\mathcal{X}$ do not always coincide with the compact objects of $D_{\operatorname{qc}}(\mathcal{X})$. Thus, for nonconcentrated algebraic stacks, a formulation purely in terms of compact objects is not possible.

\begin{example}
    \label{ex:bkga_concentrated_perf_fails}
    Consider the classifying stack $B_{k}\mathbb{G}_a$ over a field $k$ of characteristic $p>0$. The stack $B_{k}\mathbb{G}_a$ is smooth and of finite type over $k$, and the category of quasi-coherent sheaves on $B_{k}\mathbb{G}_a$ is equivalent to the category of $\mathbb{G}_a$-representations over $k$. By \cite[Proposition\ 3.1]{Hall/Neeman/Rydh:2019}, we known that $D_{\operatorname{qc}}(B_{k}\mathbb{G}_a)$ has no nonzero compact objects. However, we see that $\operatorname{Perf}(B_{k}\mathbb{G}_a)$ and $D^b_{\operatorname{coh}}(B_{k}\mathbb{G}_a)$ are both equivalent to the derived categories of finite dimensional $\mathbb{G}_a$-representations, which is nontrivial. Nevertheless, \Cref{intrcor:derived_characterization_regularity_stacks} still holds for $B_{k}\mathbb{G}_a$, but one must not use compact objects to characterize regularity.
    For the case of finite group quotients see \Cref{ex:C2}.
\end{example}

To circumvent the concentratedness hypothesis, we begin by detecting regularity at closed points via the perfectness of the structure sheaves of their associated residual gerbes. At this stage, detecting regularity at arbitrary points is unclear. However, the hypotheses of \Cref{intrcor:derived_characterization_regularity_stacks} ensure both the existence of sufficiently many closed points and that the regular locus is stable under generalization. These properties allow us to deduce regularity at all points of the algebraic stack.

A natural consequence of this approach is the following stack theoretic analogue of the existence of a  classical generator for the bounded derived category of coherent sheaves.

\begin{proposition}
    \label{introprop:j_condition_stacks}
    Let $\mathcal{X}$ be a concentrated Noetherian algebraic stack satisfying the $1$-Thomason condition. 
    Then the following are equivalent:
    \begin{enumerate}
        \item $\operatorname{reg}(\mathcal{Z})$ contains a nonempty open for every integral closed substack $\mathcal{Z}$ of $\mathcal{X}$
        \item $\operatorname{reg}(\mathcal{Z})$ is open for every integral closed substack $\mathcal{Z}$ of $\mathcal{X}$
        \item $D^b_{\operatorname{coh}}(\mathcal{Z})$ admits a classical generator for every integral closed substack $\mathcal{Z}$ of $\mathcal{X}$.
    \end{enumerate}
    Moreover, if any of these conditions hold, then  $D^b_{\operatorname{coh}}(\mathcal{Y})$ admits a classical generator for any closed substack $\mathcal{Y}$ of $\mathcal{X}$.
\end{proposition}

This result appears later as \Cref{prop:j_condition_stacks}. It generalizes previous results of Iyengar and Takahashi for commutative rings \cite{Iyengar/Takahashi:2019} and of Dey and the second author for schemes \cite{Dey/Lank:2024a}. See \Cref{sec:prelim_algebraic_stacks_thomason} for the definition of the Thomason condition. Many Noetherian algebraic stacks satisfy the $1$-Thomason condition. This includes, for instance, those with quasi-finite separated diagonals \cite[Theorem A]{Hall/Rydh:2017}. Also, concentrated regular algebraic stacks with quasi-finite diagonal \cite[Corollary 1.2]{Lank:2026}.

\begin{ack}
    Timothy De Deyn was supported by ERC Consolidator Grant 101001227 (MMiMMa). Pat Lank and Kabeer Manali Rahul were supported by ERC Advanced Grant 101095900-TriCatApp. Kabeer Manali Rahul was also supported by an Australian Government Research Training Program Scholarship. Fei Peng was supported by the Australian Research Council DP210103397 and FT210100405, a Melbourne Research Scholarship, and the Science Abroad Travelling Scholarship offered by the University of Melbourne. The authors thank Jack Hall for discussions and suggestions.
\end{ack}

\section{Preliminaries}
\label{sec:prelim}

\subsection{Triangulated categories}
\label{sec:prelim_tricats}

Let $\mathcal{T}$ be a triangulated category with shift functor $[1]$.

\subsubsection{Generation}
\label{sec:prelim_generation}

We briefly discuss generation of triangulated categories. See \cite{Bondal/VandenBergh:2003, Rouquier:2008,Neeman:2021a} for detail. Fix a subcategory $\mathcal{S}\subseteq\mathcal{T}$. Define $\operatorname{add}(\mathcal{S})$ to be the smallest strictly full subcategory of $\mathcal{T}$ containing $\mathcal{S}$ closed under shifts, finite coproducts, and direct summands. Inductively, define 
\begin{displaymath}
    \langle\mathcal{S}\rangle_n :=
    \begin{cases}
        \operatorname{add}(\varnothing) & n=0, \\
        \operatorname{add}(\mathcal{S}) & n=1, \\
        \operatorname{add}(\{ \operatorname{cone}\phi \mid \phi \in \operatorname{Hom}(\langle \mathcal{S} \rangle_{n-1}, \langle \mathcal{S} \rangle_1) \}) & n>1.
    \end{cases}
\end{displaymath}
An object $G\in \mathcal{T}$ is a called a \textbf{classical generator} if $\mathcal{T} = \langle G\rangle$ and a \textbf{strong generator} if $\mathcal{T} = \langle G\rangle_{n+1}$ for some $n\geq 0$. We say that triangulated subcategory is called \textbf{thick} if it is closed under direct summands.
In fact, $\langle\mathcal{S}\rangle := \cup_{n\geq 0} \langle\mathcal{S}\rangle_n$ is the smallest thick subcategory of $\mathcal{T}$ containing $\mathcal{S}$.

If $\mathcal{T}$ admits small coproducts, define $\operatorname{Add}(\mathcal{S})$ to be the smallest strictly full subcategory of $\mathcal{T}$ containing $\mathcal{S}$ closed under shifts, small coproducts, and direct summands. 
Similarly, inductively define 
\begin{displaymath}
    \overline{\langle\mathcal{S}\rangle}_n :=
    \begin{cases}
        \operatorname{Add}(\varnothing) & n=0, \\
        \operatorname{Add}(\mathcal{S}) & n=1, \\
        \operatorname{Add}(\{ \operatorname{cone}\phi \mid \phi \in \operatorname{Hom}(\overline{\langle \mathcal{S} \rangle}_{n-1}, \overline{\langle \mathcal{S} \rangle}_1) \}) & n>1.
    \end{cases}
\end{displaymath}
An object $G\in\mathcal{T}$ is called a \textbf{strong $\oplus$-generator} if $\mathcal{T}=\overline{\langle G \rangle}_n$ for some $n\geq 0$.

\begin{example}
    \label{ex:regular_ring_strong_generator}
    Let $X=\operatorname{Spec}(R)$ where $R$ is a regular ring of finite Krull dimension. Then $\overline{\langle\mathcal{O}_X \rangle}_{\dim X + 1} = D_{\operatorname{qc}}(X)$. On some level, this goes as far back as \cite{Kelly:1965}, \cite{Street:1973}, and \cite[Corollary 8.4]{Christensen:1998}; see \cite[Corollary 4.3.13]{Letz:2020} for a modern treatment.
\end{example}

\subsubsection{\texorpdfstring{$t$}{t}-structures}
\label{sec:prelim_t-structures}
 
We recall some background concerning $t$-structures, but the reader is referred to \cite{Beilinson/Berstein/Deligne/Gabber:2018}. When $\mathcal{T}$ admits small coproducts, let $\mathcal{T}^c$ denote the subcategory of compact objects in $\mathcal{T}$, i.e.\ consisting of those objects $c$ such that $\operatorname{Hom}(c,-)$ preserves coproducts. A pair of strictly full subcategories $\tau=(\mathcal{T}^{\leq 0},\mathcal{T}^{\geq 0})$ of $\mathcal{T}$ is called a \textbf{$t$-structure} if the following conditions are satisfied:
\begin{itemize}
    \item for all $A\in\mathcal{T}^{\leq 0}$ and $B\in\mathcal{T}^{\geq 0}[-1]$, one has $\operatorname{Hom}(A,B)=0$,
    \item $\mathcal{T}^{\leq 0}[1]\subseteq\mathcal{T}^{\leq 0}$ and $\mathcal{T}^{\geq 0}[-1]\subseteq\mathcal{T}^{\geq 0}$,
    \item for any $E\in\mathcal{T}$, there exists a distinguished triangle
    \begin{displaymath}
        A \to E \to B \to A[1]
    \end{displaymath}
    with $A\in\mathcal{T}^{\leq 0}$ and $B\in\mathcal{T}^{\geq 0}[-1]$.
\end{itemize}
For any integer $n$, define $\mathcal{T}^{\leq n}:= \mathcal{T}^{\leq 0}[-n]$ and $\mathcal{T}^{\geq n}:= \mathcal{T}^{\geq 0}[-n]$. 
A pair of $t$-structures $(\mathcal{T}^{\leq 0}_1, \mathcal{T}^{\geq 0}_1)$ and $(\mathcal{T}^{\leq 0}_2, \mathcal{T}^{\geq 0}_2)$ on $\mathcal{T}$ are called \textbf{equivalent} if there exists an $N\geq 0$ such that $\mathcal{T}^{\leq -N}_2 \subseteq \mathcal{T}^{\leq 0}_1 \subseteq \mathcal{T}^{\leq N}_2$. This forms an equivalence relation on the collection of $t$-structures on $\mathcal{T}$.
For example, the pairs $(\mathcal{T}^{\leq n}, \mathcal{T}^{\geq n})$ are all equivalent $t$-structure on $\mathcal{T}$.

An \textbf{aisle} on $\mathcal{T}$ is a strictly full subcategory $\mathcal{A}$ of $\mathcal{T}$ that is closed under positive shifts, extensions and such that the inclusion $\mathcal{A} \subseteq \mathcal{T}$ admits a right adjoint.
For any $t$-structure $(\mathcal{T}^{\leq 0},\mathcal{T}^{\geq 0})$, one has that $\mathcal{T}^{\leq 0}$ is an aisle. 
Conversely, any aisle $\mathcal{A}$ gives rise to a $t$-structure $(\mathcal{A},\mathcal{A}^{\perp}[1])$ (see \cite{Keller/Vossieck:1988}), where
$\mathcal{A}^\perp:= \{ T\in \mathcal{T}\mid \operatorname{Hom}(A,T)=0\text{ for all }A\in\mathcal{A}\}$.
For any $t$-structure, we therefore call $\mathcal{T}^{\leq 0}$ (resp.\ $\mathcal{T}^{\geq 0}$) the aisle (resp.\ coaisle) of the $t$-structure. It is useful to note that aisles are closed under direct summands \cite[Corollary 1.4]{AlonsoTarrio/JeremiasLopez/Salorio/Souto:2003}.

\begin{example}
    Assume that $\mathcal{T}$ admits small coproducts. Let $\mathcal{A}$ be a full subcategory of $\mathcal{T}^c$ closed under positive shifts. Denote by $\operatorname{Coprod}(\mathcal{A})$ the smallest strictly full subcategory of $\mathcal{T}$ that contains $\mathcal{A}$ which is closed under extensions and small coproducts. By \cite[Theorem 2.3.3 \& Remark 2.3.4]{Canonaco/Haesemeyer/Neeman/Stellari:2024} (which generalizes \cite[Theorem A.1 \& Proposition A.2]{AlonsoTarrio/LopezJeremias/Salorio:2003}), this construction defines an aisle in $\mathcal{T}$. We call the associated $t$-structure $\tau_{\mathcal{A}}$ the \textbf{$t$-structure compactly generated by $\mathcal{A}$}. If $\mathcal{A}=\{G[i]\mid i\ge 0\}$ for some compact object $G$; we denote the corresponding compactly generated $t$-structure by $\tau_G$. If $\mathcal{T}$ is compactly generated by a single object $G$, we define the \textbf{preferred equivalence class} to be the equivalence class of $t$-structures containing the $t$-structure compactly generated by $G$.
\end{example}

Let $F \colon \mathcal{T} \to \mathcal{T}^\prime$ be an exact functor between triangulated categories equipped with $t$-structures $(\mathcal{T}^{\leq 0}, \mathcal{T}^{\geq 0})$ and $((\mathcal{T}^\prime)^{\leq 0}, (\mathcal{T}^\prime)^{\geq 0})$. We say that $F$ is \textbf{right $t$-exact} if $
F(\mathcal{T}^{\leq 0}) \subseteq (\mathcal{T}^\prime)^{\leq 0}$,
and \textbf{left $t$-exact} if 
$F(\mathcal{T}^{\geq 0}) \subseteq (\mathcal{T}^\prime)^{\geq 0}$. If both conditions hold, then $F$ is \textbf{$t$-exact}.

\subsubsection{Recollements}
\label{sec:prelim_recollements}

We briefly recall the notion of a recollement. See \cite[\S 1.4]{Beilinson/Berstein/Deligne/Gabber:2018} for details. A \textbf{recollement} is a diagram of triangulated categories and exact functors of the form 
\begin{equation}
    \label{eq:recollement}
    \begin{tikzcd}
        {\mathcal{T}} && {\mathcal{K}} && {\mathcal{D}}
        \arrow["I"{description}, from=1-1, to=1-3]
        \arrow["{I_\lambda}"', bend right =25pt, from=1-3, to=1-1]
        \arrow["{I_\rho}", bend right =-25pt, from=1-3, to=1-1]
        \arrow["Q"{description}, from=1-3, to=1-5]
        \arrow["{Q_\lambda}"', bend right =25pt, from=1-5, to=1-3]
        \arrow["{Q_\rho}", bend right =-25pt, from=1-5, to=1-3]
    \end{tikzcd}
\end{equation}
satisfying:
\begin{itemize}
    \item $I_\lambda \dashv I \dashv I_\rho$ and $Q_\lambda \dashv Q \dashv Q_\rho$ (i.e.\ adjoint triples)
    \item $I, Q_\lambda, Q_\rho$ are fully faithful
    \item $\ker (Q)$ coincides with the essential image of $I$, i.e.\ the smallest strictly full subcategory containing $\{I(T) \mid T\in \mathcal{T}\}$.
\end{itemize}
As a consequence of the definition, there are distinguished triangles
\begin{displaymath}
    \begin{aligned}
        &(Q_\lambda \circ Q )(E)  \to E \to (I \circ I_\lambda )(E) \to (Q_\lambda \circ Q) (E)[1],
        \\& (I \circ I_\rho) (E) \to E \to (Q_\rho \circ Q) (E) \to (I \circ I_\rho )(E)[1]
    \end{aligned}
\end{displaymath}
which are functorial in $E\in \mathcal{K}$. In fact, the natural transformations between these functors are given by the (co)units of the relevant adjoint pairs. Since $Q_\lambda$, $Q$, $I$, and $I_\lambda$ are left adjoints, they preserve coproducts. Additionally, as $I$ and $Q$ admit right adjoints, and so preserve coproducts, $I_\lambda$ and $Q_\lambda$ preserve compact objects (see \cite[Theorem 5.1]{Neeman:1996}).

We may glue $t$-structures along recollements. Indeed, consider a recollement as above and suppose we are given aisles $\mathcal{A}_{\mathcal{T}}$ and $\mathcal{A}_{\mathcal{D}}$ on respectively  $\mathcal{T}$ and $\mathcal{D}$. Then the \textbf{glued $t$-structure} on $\mathcal{K}$ has an associated aisle given by 
\begin{displaymath}
    \mathcal{A}_{\mathcal{K}} := \{ E\in \mathcal{K} \mid I_\lambda (E)\in \mathcal{A}_{\mathcal{T}} \textrm{ and } Q(E)\in \mathcal{A}_{\mathcal{D}} \}.
\end{displaymath}
See \cite[\S 1.4.10]{Beilinson/Berstein/Deligne/Gabber:2018} for further details. 

\subsection{Algebraic stacks}
\label{sec:prelim_algebraic_stacks}

We collect some facts for algebraic stacks. Our conventions follow \cite{stacks-project}. An exception to this is the derived pullback/pushforward adjunction where we follow \cite[\S1]{Hall/Rydh:2017,Olsson:2007,Laszlo/Olsson:2008}. The symbols $X$, $Y$, etc.\ refer to schemes and algebraic spaces, whereas $\mathcal{X}$, $\mathcal{Y}$, etc.\ are for algebraic stacks.
We denote the underlying topological space of an algebraic stack $\mathcal{X}$ by $|\mathcal{X}|$.

\subsubsection{Derived categories and functors}
\label{sec:prelim_algebraic_stacks_categories_functors}

Let $\mathcal{X}$ be an algebraic stack. Denote by $\operatorname{Mod}(\mathcal{X})$ the Grothendieck Abelian category of sheaves of $\mathcal{O}_\mathcal{X}$-modules on the lisse-\'{e}tale topos of $\mathcal{X}$. The subcategories of quasi-coherent $\mathcal{O}_\mathcal{X}$-modules are denoted by $\operatorname{Qcoh}(\mathcal{X})$. Set $D(\mathcal{X}):=D(\operatorname{Mod}(\mathcal{X}))$ to be the derived category of $\operatorname{Mod}(\mathcal{X})$. Let $D_{\operatorname{qc}}(\mathcal{X})\subseteq D(\mathcal{X})$ be the full subcategory of complexes with quasi-coherent cohomology sheaves. If $\mathcal{X}$ is locally Noetherian, let $D_{\operatorname{coh}}(\mathcal{X})\subseteq D(\mathcal{X})$ consists of complexes with coherent cohomology. Also, denote $D^{\flat}_{\#}(\mathcal{X}):=D^{\flat}(\mathcal{X})\cap D_{\#}(\mathcal{X})$ for $\flat\in \{+,-,b,\geq n,\dots\}$ and $\#\in \{\operatorname{qc},\operatorname{coh}\}$. 

Let $f\colon \mathcal{X}\to \mathcal{Y}$ be a morphism of algebraic stacks. We refer to \cite[\S 1]{Hall/Rydh:2017} for the construction of the adjunction $\mathbf{L}f^\ast \colon D_{\operatorname{qc}}(\mathcal{Y})\rightleftarrows D_{\operatorname{qc}}(\mathcal{X}) \colon \mathbf{R}f_\ast$. We say $f$ is \textbf{concentrated} if it is quasi-compact, quasi-separated, and if the pushforward of any base change along a quasi-compact quasi-separated morphism $\mathcal{Z}\to \mathcal{X}$ has finite cohomological dimension. See \cite[\S 2]{Hall/Rydh:2017} for details.

One says a quasi-compact quasi-separated algebraic stack $\mathcal{X}$ is \textbf{concentrated} when the structure morphism $\mathcal{X} \to \operatorname{Spec}(\mathbb{Z})$ is such.
Given quasi-compact quasi-separated algebraic stacks with finitely presented inertia, being concentrated with quasi-finite diagonal is equivalent to the stack being tame; see e.g.\ \cite[Appendix A]{DeDeyn/Lank/ManaliRahul:2025}.

\subsubsection{Compacts and perfects}
\label{sec:prelim_algebraic_stacks_compacts}

On any ringed site, e.g.\ the lisse-\'{e}tale site of an algebraic stack, a complex is \textbf{strictly perfect} if it is a bounded complex with each term a direct summand of a finite free module. A complex is \textbf{perfect} if it is locally strictly perfect. Let $\operatorname{Perf}(\mathcal{X})$ denote the triangulated subcategory of $D_{\operatorname{qc}}(\mathcal{X})$ consisting of perfect complexes.

The following is a slight reformulation of \cite[Lemma 4.1]{Hall/Rydh:2017}, where we added the `fpqc' assumption that was missing in loc.\ cit. Following \cite[\href{https://stacks.math.columbia.edu/tag/022B}{Tag 022B}]{stacks-project}, we say a collection of morphism of algebraic stacks $\{f_i\colon\mathcal{X}_i\to\mathcal{X}\}_i$ is a \textbf{fpqc covering} if each $f_i$ is flat and for every quasi-compact open subset $\mathcal{U} \subseteq\mathcal{X}$ there exists a \emph{finite} number of quasi-compact opens $\mathcal{V}_j\subseteq \mathcal{X}_j$ with $\mathcal{U}=\cup_j f_j(\mathcal{V}_j)$. We say a morphisms $f\colon\mathcal{U}\to\mathcal{X}$ is fpqc if $\{f\colon\mathcal{Y}\to\mathcal{X}\}$ is a fpqc covering (in particular, $f$ is automatically surjective).

\begin{lemma}
    \label{lem:perfect_characterization}
    Let $\mathcal{X}$ be an algebraic stack. 
    For any $P\in D_{\operatorname{qc}}(\mathcal{X})$ the following are equivalent
    \begin{enumerate}
        \item\label{item:perf1} $P$ is perfect,
        \item\label{item:perf2} for every fpqc morphism $U \to \mathcal{X}$ from a scheme, $\mathbf{L}f^\ast P$ is perfect,
        \item\label{item:perf3} there exists a fpqc morphism $U \to \mathcal{X}$ from a scheme with $\mathbf{L}f^\ast P$ perfect,
        \item\label{item:perf4} there exists an fpqc covering $\{\operatorname{Spec}(R_i) \to \mathcal{X}\}_i$ such that each $\mathbf{R}\Gamma(\operatorname{Spec}(R_i),\mathbf{L}f^\ast P)$ is a strictly perfect complex of $R_i$-modules.
    \end{enumerate}
\end{lemma}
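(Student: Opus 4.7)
The plan is to establish the four equivalences in the cyclic pattern $(1) \Rightarrow (2) \Rightarrow (3) \Rightarrow (1)$ together with $(1) \Leftrightarrow (4)$, following the argument of \cite[Lemma 4.1]{Hall/Rydh:2017} and ultimately relying on faithfully flat descent of perfect complexes. For the easy directions: $(1) \Rightarrow (2)$ will follow because the derived pullback $\mathbb{L}f^\ast_{\operatorname{qc}}$ sends strictly perfect complexes to strictly perfect complexes (pullback preserves bounded complexes and direct summands of finite free modules), together with the fact that being perfect is local on the lisse-\'etale site by the very definition. The implication $(2) \Rightarrow (3)$ is immediate, as any algebraic stack admits a smooth, and hence flat, surjective cover by a scheme.

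For the crucial step $(3) \Rightarrow (1)$, given $f\colon U \to \mathcal{X}$ flat surjective from a scheme with $\mathbb{L}f^\ast_{\operatorname{qc}}P$ perfect, I fix a smooth surjective $g\colon V \to \mathcal{X}$ from a scheme and form the cartesian square
\[
\begin{tikzcd}
W \ar[r, "p"] \ar[d, "q"'] & V \ar[d, "g"] \\
U \ar[r, "f"'] & \mathcal{X}
\end{tikzcd}
\]
in which $W$ is an algebraic space and $p$ is flat surjective. Flat base change gives $\mathbb{L}q^\ast_{\operatorname{qc}} \mathbb{L}f^\ast_{\operatorname{qc}} P \simeq \mathbb{L}p^\ast_{\operatorname{qc}} \mathbb{L}g^\ast_{\operatorname{qc}} P$, and the left-hand side is perfect by the easy direction applied to $q$. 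Faithfully flat descent of perfect complexes on algebraic spaces---locally reducing to the classical statement that a complex in $D(R)$ is perfect iff its base change along a faithfully flat map $R \to S$ is perfect in $D(S)$---then forces $\mathbb{L}g^\ast_{\operatorname{qc}} P$ to be perfect on $V$. Since perfectness on $\mathcal{X}$ is detected smooth-locally by the definition of the lisse-\'etale site, $P$ itself is perfect. This descent step is the main obstacle: strengthening the smooth-local detection built into the definition to flat-local detection is not formal and hinges on the module-theoretic descent input.

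For the equivalence $(1) \Leftrightarrow (4)$: for $(1) \Rightarrow (4)$, given $x \in |\mathcal{X}|$ I choose an affine smooth chart $h\colon \operatorname{Spec}(R) \to \mathcal{X}$ whose image contains $x$; then $\mathbb{L}h^\ast_{\operatorname{qc}}P$ is perfect on $\operatorname{Spec}(R)$, so $\mathbb{R}\Gamma(\operatorname{Spec}(R), \mathbb{L}h^\ast_{\operatorname{qc}}P)$ is a perfect complex of $R$-modules, hence quasi-isomorphic to a bounded complex of finitely generated projective modules, i.e.\ strictly perfect. For $(4) \Rightarrow (3)$, I pick the given flat chart $\operatorname{Spec}(R_x) \to \mathcal{X}$ for each $x \in |\mathcal{X}|$ and set $U := \bigsqcup_x \operatorname{Spec}(R_x)$; this is a scheme with a flat surjective morphism to $\mathcal{X}$, and the pullback of $P$ is strictly perfect on each component by hypothesis, hence perfect on $U$.
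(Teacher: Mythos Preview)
Your proposal is correct and essentially follows the same approach as the paper, which proves the cycle $(1)\Rightarrow(2)\Rightarrow(3)\Rightarrow(4)\Rightarrow(1)$ and cites \cite[Lemma~4.1]{Hall/Rydh:2017} directly for the only nontrivial step $(4)\Rightarrow(1)$; you instead take $(3)\Rightarrow(1)$ as the hard step and spell out the faithfully flat descent argument that underlies that reference, then close the loop with $(4)\Rightarrow(3)$ via a disjoint union of affine charts. One cosmetic remark: the identity $\mathbb{L}q^\ast_{\operatorname{qc}} \mathbb{L}f^\ast_{\operatorname{qc}} P \simeq \mathbb{L}p^\ast_{\operatorname{qc}} \mathbb{L}g^\ast_{\operatorname{qc}} P$ is just pseudofunctoriality of derived pullback along the $2$-commutative square, not flat base change.
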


\begin{proof}
    That $\eqref{item:perf4}\implies\eqref{item:perf1}$ is \cite[Lemma 4.1]{Hall/Rydh:2017}. Indeed, the question is local on $\mathcal{X}$, so we may assume $\mathcal{X}$ is affine and use \cite[\href{https://stacks.math.columbia.edu/tag/068T}{Tag 068T}]{stacks-project}.  
    That $\eqref{item:perf1}\implies\eqref{item:perf2}\implies\eqref{item:perf3}\implies\eqref{item:perf4}$ is clear (pullback preserves perfect complexes, e.g.\ one can check this locally).
\end{proof}

In fact, the perfect complexes coincide with the dualizable (also known as, rigid objects) in $D_{\operatorname{qc}}(\mathcal{X})$ \cite[Lemma 4.3]{Hall/Rydh:2017}.
However, in general the perfect complexes and compact objects (in $D_{\operatorname{qc}}$) are not the same for quasi-compact quasi-separated algebraic stacks (as opposed to what happens for schemes). 
In fact, by \cite[Lemma 2.5(5) and Remark 4.6]{Hall/Rydh:2017} the following are equivalent: 
\begin{itemize}
    \item every perfect complex on $\mathcal{X}$ is compact, i.e.\ $\operatorname{Perf}(\mathcal{X})=D_{\operatorname{qc}}(\mathcal{X})^c$,
    \item $\mathcal{O}_{\mathcal{X}}$ is compact,
    \item $\mathcal{X}$ is concentrated. 
\end{itemize}
 
We note the following for future reference. Here, $\operatorname{\mathbf{R}\mathcal{H}\! \mathit{om}}_{\mathcal{O}_{\mathcal{X}}}$ denotes the internal Hom in $D_{\operatorname{qc}}(\mathcal{X})$. 

\begin{lemma}
    \label{lem:hall_pullback_sheaf_hom}
    Let $f \colon \mathcal{Y} \to \mathcal{X}$ be a concentrated morphism of algebraic stacks. Then the natural morphism
    \begin{displaymath}
        \mathbf{L}f^\ast \operatorname{\mathbf{R}\mathcal{H}\! \mathit{om}}_{\mathcal{O}_{\mathcal{X}}}(F,G)
        \to \operatorname{\mathbf{R}\mathcal{H}\! \mathit{om}}_{\mathcal{O}_{\mathcal{Y}}}(\mathbf{L}f^\ast F, \mathbf{L}f^\ast G)\quad\text{in $D_{\operatorname{qc}}(\mathcal{Y})$}
    \end{displaymath}
    is an isomorphism for any $F \in \operatorname{Perf}(\mathcal{X})$ and $G\in D_{\operatorname{qc}}(\mathcal{X})$.
\end{lemma}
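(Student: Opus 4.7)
The strategy is to reduce everything to the dualizability of perfect complexes. Since $F$ is perfect, by \cite[Lemma 4.3]{Hall/Rydh:2017} it is dualizable with respect to the symmetric monoidal structure on $(D_{\operatorname{qc}}(\mathcal{X}), \otimes^{\mathbb{L},\operatorname{qc}}_{\mathcal{O}_\mathcal{X}})$, with dual $F^\vee := \operatorname{\mathbb{R}\mathcal{H}\! \mathit{om}}\nolimits^{\operatorname{qc}}_{\mathcal{O}_\mathcal{X}}(F,\mathcal{O}_\mathcal{X})$. From the universal property of internal Hom together with dualizability one obtains, for every $G\in D_{\operatorname{qc}}(\mathcal{X})$, a functorial isomorphism
\[
\operatorname{\mathbb{R}\mathcal{H}\! \mathit{om}}\nolimits^{\operatorname{qc}}_{\mathcal{O}_\mathcal{X}}(F,G)\;\simeq\;F^\vee \otimes^{\mathbb{L},\operatorname{qc}}_{\mathcal{O}_\mathcal{X}} G.
\]

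Next, I would use that $\mathbb{L}f^*_{\operatorname{qc}}$ is a symmetric monoidal functor: it preserves the derived tensor product on quasi-coherent complexes and sends $\mathcal{O}_\mathcal{X}$ to $\mathcal{O}_\mathcal{Y}$. Any symmetric monoidal functor carries dualizable objects to dualizable objects, compatibly with duals. Therefore $\mathbb{L}f^*_{\operatorname{qc}} F$ is dualizable on $\mathcal{Y}$ with a natural identification $\mathbb{L}f^*_{\operatorname{qc}}(F^\vee)\simeq (\mathbb{L}f^*_{\operatorname{qc}} F)^\vee$, and the monoidality gives
\[
\mathbb{L}f^*_{\operatorname{qc}}(F^\vee \otimes^{\mathbb{L},\operatorname{qc}} G)\;\simeq\;\mathbb{L}f^*_{\operatorname{qc}}(F^\vee)\otimes^{\mathbb{L},\operatorname{qc}} \mathbb{L}f^*_{\operatorname{qc}}(G).
\]
Chaining these with the dualizability isomorphism on $\mathcal{Y}$ applied to $\mathbb{L}f^*_{\operatorname{qc}} F$ yields
\[
\mathbb{L}f^*_{\operatorname{qc}}\operatorname{\mathbb{R}\mathcal{H}\! \mathit{om}}\nolimits^{\operatorname{qc}}_{\mathcal{O}_\mathcal{X}}(F,G)\;\simeq\;(\mathbb{L}f^*_{\operatorname{qc}} F)^\vee \otimes^{\mathbb{L},\operatorname{qc}} \mathbb{L}f^*_{\operatorname{qc}} G\;\simeq\; \operatorname{\mathbb{R}\mathcal{H}\! \mathit{om}}\nolimits^{\operatorname{qc}}_{\mathcal{O}_\mathcal{Y}}(\mathbb{L}f^*_{\operatorname{qc}} F,\mathbb{L}f^*_{\operatorname{qc}} G).
\]

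The main thing to verify is that this composite isomorphism is the \emph{natural comparison map} of the statement, not merely some abstract isomorphism. For this, I would chase definitions: by the universal property of internal Hom, both maps are determined by what they do after tensoring with $\mathbb{L}f^*_{\operatorname{qc}} F$ and composing with the pullback of the evaluation $\operatorname{\mathbb{R}\mathcal{H}\! \mathit{om}}\nolimits^{\operatorname{qc}}(F,G)\otimes F\to G$; both reduce to the pullback of this evaluation, since $\mathbb{L}f^*_{\operatorname{qc}}$ is monoidal. This identification is the main (minor) obstacle and is essentially bookkeeping in the symmetric monoidal $2$-categorical framework.

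As a sanity-check or backup, one can argue smooth-locally on source and target using \Cref{lem:perfect_characterization}: both sides commute with smooth pullback, reducing to the well-known case of a morphism of affine schemes (cf.\ \cite[Tag 0A6A]{stacks-project}). I do not expect to need the \emph{concentrated} hypothesis for the argument itself—it only enters to ensure the derived pullback on $D_{\operatorname{qc}}$ interacts well enough with the relevant functors on both sides, and to remain consistent with the setup of \cite{Hall/Rydh:2017}.
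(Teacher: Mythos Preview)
Your argument is correct and takes a genuinely different route from the paper. The paper's proof is a one-liner: it observes that both sides are smooth-local on source and target, reduces to the affine scheme case, and cites a standard reference. Your main argument instead works intrinsically in the closed symmetric monoidal category $D_{\operatorname{qc}}(\mathcal{X})$, using that perfect means dualizable and that $\mathbb{L}f^\ast_{\operatorname{qc}}$ is symmetric monoidal; this is the standard categorical proof and has the advantage of making no use of the geometry at all---in particular it shows (as you note) that the concentrated hypothesis is irrelevant here. Your ``sanity-check'' paragraph is essentially the paper's entire proof. The trade-off is that your approach requires the extra bookkeeping step of identifying the abstract composite with the canonical comparison map, whereas the paper sidesteps this by reducing to a situation where the identification is already in the literature.
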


\begin{proof}
    The problem is smooth local on the source and target (use \cite[Lemma 4.3]{Hall/Rydh:2017}). This allows us to reduce to the scheme setting.
    In this case, the claim follows by e.g.\ \cite[Proposition 22.70]{Gortz/Wedhorn:2023}.
\end{proof}

\subsubsection{Support}
\label{sec:prelim_algebraic_stacks_support}

For a scheme $X$ and $M\in\operatorname{Qcoh}(X)$, define 
\begin{displaymath}
    \operatorname{supp}(M):=\{x\in X \mid M_x \neq 0\}.
\end{displaymath}
More generally, for an algebraic stack $\mathcal{X}$ and $M\in\operatorname{Qcoh}(\mathcal{X})$ define 
\begin{displaymath}
    \operatorname{supp}(M):=p(\operatorname{supp}(p^\ast M))\subseteq|\mathcal{X}|
\end{displaymath}
where $p \colon U \to \mathcal{X}$ is any smooth surjective morphism from a scheme (one can check this is independent of  choice). 
Finally, for an object $E\in D_{\operatorname{qc}}(\mathcal{X})$, define the \textbf{(cohomological) support of $E$}, as
\begin{displaymath}
\operatorname{supp}(E):= \bigcup_{j\in\mathbb{Z}} \operatorname{supp}\left(\mathcal{H}^j (E)\right).    
\end{displaymath}

Let $Z\subseteq|\mathcal{X}|$ be a closed subset. We say $E\in D_{\operatorname{qc}}(\mathcal{X})$ is \textbf{supported on $Z$} if $\operatorname{supp}(E)\subseteq Z$. Define $D_{\operatorname{qc},Z}(\mathcal{X})$ as the full subcategory of $D_{\operatorname{qc}}(\mathcal{X})$ consisting of objects supported on $Z$. Similar categories are defined using the adornments $+$, $-$, $b$, etc. Furthermore, if $\mathcal{Z}\subseteq \mathcal{X}$ is a closed substack, then we say $E$ is \textbf{supported on $\mathcal{Z}$} when $E$ is supported on $|\mathcal{Z}|$. 

\subsubsection{Thomason condition}
\label{sec:prelim_algebraic_stacks_thomason}

Let $\beta$ be a cardinal.
We say a quasi-compact quasi-separated algebraic stack $\mathcal{X}$ satisfies the \textbf{$\beta$-Thomason condition} if $D_{\operatorname{qc}}(\mathcal{X})$ is compactly generated by a collection of cardinality $\leq \beta$ and for each quasi-compact open immersion $\mathcal{U}\hookrightarrow \mathcal{X}$, there exists a perfect complex $P$ over $\mathcal{X}$ with support $|\mathcal{X}|\setminus|\mathcal{U}|$. See \cite[Definition 8.1]{Hall/Rydh:2017}. We are primarily interested in the case $\beta = 1$. It follows by \cite[Lemma 4.10]{Hall/Rydh:2017} that for a $1$-Thomason algebraic stack, $D_{\operatorname{qc},Z}(\mathcal{X})$ is compactly generated by a single object whenever $Z$ has quasi-compact complement.

\section{Regularity for algebraic stacks}
\label{sec:regularity_for_stacks}

This section shows that regularity of algebraic stacks is a homological notion, akin to Serre's homological characterization of regular local rings. Let us recall the definition of regularity for algebraic stacks for convenience, following e.g.\ \cite[\href{https://stacks.math.columbia.edu/tag/04YE}{Tag 04YE}]{stacks-project}. For defining regularity at a point, we make use of the fact that if $(X,x)$ is a germ of a scheme the property `$\mathcal{O}_{X,x}$ is regular' is a smooth local property of germs by \cite[Proposition 17.5.8]{EGAIV4:1967}.

\begin{definition}
    \label{def:regular_algebraic_stack}
    Let $\mathcal{X}$ be an algebraic stack. A point $x\in |\mathcal{X}|$ is called \textbf{regular} if there exists a smooth morphism $f \colon U \to \mathcal{X}$ from a scheme and a point $u\in U$ with $f(u) = x$ such that $\mathcal{O}_{U,u}$ is a regular local ring. More generally, $\mathcal{X}$ is called \textbf{regular} if there exists a smooth surjective morphism $U \to \mathcal{X}$ from a regular scheme.
\end{definition}

We could have given a ‘point-wise’ definition for the regularity of an algebraic stack, as the following lemma shows. Moreover, for decent algebraic stacks it suffices to look at closed points. Recall that an algebraic stack is called \textbf{decent} when it has enough `quasi-compact points' (see \cite[\href{https://stacks.math.columbia.edu/tag/0GW0}{Tag 0GW0}]{stacks-project}). For us, it suffices to know that any algebraic stack with quasi-compact diagonal (in particular, any quasi-separated algebraic stack) is decent \cite[\href{https://stacks.math.columbia.edu/tag/v}{Tag 0GW2}]{stacks-project}. This includes Noetherian algebraic stacks (because they are quasi-separated by definition). 

\begin{lemma}
    \label{lem:regular_algebraic_stack_equivalence}
    An algebraic stack $\mathcal{X}$ is regular if, and only if, every point $x\in |\mathcal{X}|$ is regular.
    Moreover, when $\mathcal{X}$ is quasi-compact and decent, 
    it suffices to only check the closed points.
    In addition, any regular algebraic stack is locally Noetherian and normal.
\end{lemma}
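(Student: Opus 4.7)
The plan is to treat the three assertions in sequence, leveraging at each step that regularity of a local ring is smooth-local on germs by \cite[Proposition~17.5.8]{EGAIV4:1967}.

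For the first equivalence, the forward direction is immediate: a smooth surjective cover $U\to\mathcal{X}$ by a regular scheme witnesses regularity of every $x\in|\mathcal{X}|$ via any preimage $u\in U$. For the converse I would fix an arbitrary smooth cover $f\colon U\to\mathcal{X}$ by a scheme (which exists since $\mathcal{X}$ is algebraic) and prove that $U$ is regular, i.e.\ that $\mathcal{O}_{U,u}$ is regular for every $u\in U$. Given $u\in U$ with image $x$, the hypothesis supplies a second smooth map $g\colon V\to\mathcal{X}$ and $v\in V$ over $x$ with $\mathcal{O}_{V,v}$ regular. Forming the fibre product and choosing a point $w$ lying over $(u,v)$, the two smooth projections $(U\times_{\mathcal{X}}V,w)\to(U,u)$ and $(U\times_{\mathcal{X}}V,w)\to(V,v)$, combined with the cited smooth-local property of germ regularity, force $\mathcal{O}_{U,u}$ to be regular.

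For the closed-point reduction, the key input is that every non-empty closed subset of $|\mathcal{X}|$ contains a closed point whenever $\mathcal{X}$ is quasi-compact and decent, a standard consequence of the framework of \cite[Tag 0GW0]{stacks-project}. Given $x\in|\mathcal{X}|$, I would pick a closed specialisation $y\in\overline{\{x\}}$; by hypothesis $y$ is regular, so there is a smooth $f\colon U\to\mathcal{X}$ and $u\in U$ over $y$ with $\mathcal{O}_{U,u}$ regular. Flatness of $f$ ensures that $|f|\colon |U|\to|\mathcal{X}|$ lifts generalisations, so I can find $u'\in U$ generalising $u$ with $f(u')=x$. Then $\mathcal{O}_{U,u'}$ is a localisation of $\mathcal{O}_{U,u}$ at a prime, hence regular, whence $x$ is regular by the first part.

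The last assertion is clean: picking a smooth surjective cover $U\to\mathcal{X}$ by a regular scheme, $U$ is automatically locally Noetherian (regular local rings are Noetherian) and normal (regular local rings are integrally closed domains), and both properties descend smooth-locally to $\mathcal{X}$. The main subtleties I anticipate are (i) correctly bootstrapping from the point-wise regularity hypothesis to regularity of an arbitrary smooth cover via the germ-theoretic statement from EGA, and (ii) lifting generalisations along the flat morphism $f$ at the level of $|\cdot|$, which requires care in translating between specialisation orders on $|\mathcal{X}|$ and $|U|$.
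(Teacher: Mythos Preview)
Your proposal is correct and follows essentially the same route as the paper, which compresses your fibre-product argument for the first equivalence into citations of \cite[Tags 04YF, 02IT, 04YI]{stacks-project} and handles the closed-point reduction identically via \cite[Tag 0GVZ]{stacks-project} (note that this tag uses decency, not flatness alone, so your anticipated subtlety~(ii) is exactly the right thing to worry about). One small cleanup: $U\times_{\mathcal{X}}V$ is in general only an algebraic space, so pass to an \'etale cover by a scheme before speaking of local rings at $w$.
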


\begin{proof}
    Choose a smooth surjective morphism from a scheme $U\to \mathcal{X}$.
    Then, 
    \begin{displaymath}
        \begin{aligned}
            \mathcal{X}\text{ is regular} &\iff U \text{ is regular}\qquad &&\text{(\cite[\href{https://stacks.math.columbia.edu/tag/04YF}{Tag 04YF}]{stacks-project})} \\
            &\iff \text{all }u\in U\text{ are regular}\qquad &&\text{(\cite[\href{https://stacks.math.columbia.edu/tag/02IT}{Tag 02IT}]{stacks-project})} \\
            &\iff \text{all }x\in |\mathcal{X}|\text{ are regular}\qquad &&\text{(\cite[\href{https://stacks.math.columbia.edu/tag/04YI}{Tag 04YI}]{stacks-project})}.
        \end{aligned}
    \end{displaymath}
    To see that for quasi-compact decent algebraic stacks, it suffices to look at closed points. Observe, by \cite[\href{https://stacks.math.columbia.edu/tag/0GVZ}{Tag 0GVZ}]{stacks-project}, the morphism $U\to \mathcal{X}$ lifts generalizations and that there are enough closed points on quasi-compact decent algebraic stacks (i.e.\ any point specializes to a closed point).
    Indeed, for $y$ an arbitrary point of $|\mathcal{X}|$, there exists\footnote{
    It is a topological fact that any nonempty, quasi-compact, Kolmogorov topological space contains a closed point \cite[\href{https://stacks.math.columbia.edu/tag/005E}{Tag 005E}]{stacks-project}. Any decent algebraic stack has Kolmogorov underlying topological space \cite[\href{https://stacks.math.columbia.edu/tag/0GW7}{Tag 0GW7}]{stacks-project}. 
    } a closed point $x$ with $x\in \overline{\{y\}}$ (i.e.\ $y$ is a generalization of $x$).
    As $x$ is closed, and thus a regular point, there exists a $u$ lying over $x$ with $\mathcal{O}_{U,u}$ regular. 
    However, as generalizations lift by loc.\ cit.,  there exists a $v$ over $y$ such that $v$ is a generalization of $u$. Hence, it follows that $\mathcal{O}_{U,v}$ is regular, i.e.\ $y$ is regular.

    The last claim follows from the fact that regular schemes are locally Noetherian and normal, and that these properties are local for the smooth topology.
\end{proof}

\begin{remark}
    The `quasi-compact and decent' condition above is needed in order to have `enough closed points', for which we use a purely topological fact.
    For schemes one can do a bit `better': any locally Noetherian scheme has enough closed points \cite[\href{https://stacks.math.columbia.edu/tag/02IL}{Tag 02IL}]{stacks-project}.
    It would be interesting to know whether the same holds for algebraic stacks. 
    That is, does any locally Noetherian algebraic stack have enough closed points?
\end{remark}

This leads to the usual definitions for the regular and singular locus. 

\begin{definition}
\label{def:regular_locus}
    Let $\mathcal{X}$ be a locally Noetherian algebraic stack.
    \begin{enumerate}
        \item The \textbf{regular locus of $\mathcal{X}$} is defined as $\operatorname{reg} (\mathcal{X}):=\{ x\in |\mathcal{X}| \mid \text{$x$ is regular}\}$.
        \item The \textbf{singular locus of $\mathcal{X}$} is defined as $\operatorname{sing} (\mathcal{X}):=|\mathcal{X}|\setminus \operatorname{reg} (\mathcal{X})$.
    \end{enumerate}
\end{definition}

The following shows that one can also describe the regular/singular locus using smooth covers and that it is closed under generalizations when the algebraic stack is decent.

\begin{lemma}
    \label{lem:regular_locus_stable_generalization}
    Let $\mathcal{X}$ be a locally Noetherian algebraic stack. 
    Suppose $f \colon U \to \mathcal{X}$ be a smooth surjective morphism from a scheme $U$.
    Then $\operatorname{reg} (\mathcal{X})=f(\operatorname{reg}(U))$.
    Moreover, when $\mathcal{X}$ is decent the regular locus is closed under generalization.
\end{lemma}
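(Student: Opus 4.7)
The plan is to reduce both claims to smooth-local properties of regularity at points of schemes, together with the generalization lifting property of \cite[\href{https://stacks.math.columbia.edu/tag/0GVZ}{Tag 0GVZ}]{stacks-project} for decent stacks already invoked in the proof of \Cref{lem:regular_algebraic_stack_equivalence}.

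For the first equality, the inclusion $f(\operatorname{reg}(U))\subseteq\operatorname{reg}(\mathcal{X})$ is immediate from \Cref{def:regular_algebraic_stack}(1): any $u\in\operatorname{reg}(U)$ witnesses regularity of $f(u)\in|\mathcal{X}|$ via the smooth map $f$ itself. For the reverse inclusion, I would first upgrade the existential definition of regularity at a point to a universal one: a point $x\in|\mathcal{X}|$ is regular if and only if for \emph{every} smooth morphism $g\colon V\to\mathcal{X}$ from a scheme and every $v\in g^{-1}(x)$, the local ring $\mathcal{O}_{V,v}$ is regular. This upgrade follows by comparing two smooth maps via a fiber product, picking an \'etale cover of the fiber product by a scheme, and invoking the smooth-local nature of regularity for germs of schemes (\cite[\href{https://stacks.math.columbia.edu/tag/04YI}{Tag 04YI}]{stacks-project})—exactly the chain of equivalences used inside the proof of \Cref{lem:regular_algebraic_stack_equivalence}. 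Once this is in place, for any $x\in\operatorname{reg}(\mathcal{X})$, surjectivity of $f$ gives some $u\in f^{-1}(x)$, and the universal formulation forces $\mathcal{O}_{U,u}$ to be regular, so $x\in f(\operatorname{reg}(U))$.

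For the second assertion, assume $\mathcal{X}$ is decent and let $x\in\operatorname{reg}(\mathcal{X})$ with $y\in|\mathcal{X}|$ a generalization of $x$. Choose any smooth surjective $f\colon U\to\mathcal{X}$ from a scheme and, by the first part, pick $u\in f^{-1}(x)$ with $\mathcal{O}_{U,u}$ regular. The generalization-lifting property for smooth covers of decent stacks (\cite[\href{https://stacks.math.columbia.edu/tag/0GVZ}{Tag 0GVZ}]{stacks-project}) yields some $v\in U$ lying over $y$ which is a generalization of $u$. Then $\mathcal{O}_{U,v}$ is a localization of $\mathcal{O}_{U,u}$ and hence regular (localizations of regular local rings are regular, by the Auslander--Buchsbaum--Serre characterization recalled in the introduction, or \cite[\href{https://stacks.math.columbia.edu/tag/00OE}{Tag 00OE}]{stacks-project}), so $v\in\operatorname{reg}(U)$ and $y=f(v)\in\operatorname{reg}(\mathcal{X})$ by the first part.

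There is no real obstacle here; the only subtlety is the initial upgrade of \Cref{def:regular_algebraic_stack}(1) from ``there exists a smooth cover'' to ``for every smooth cover'', but this is already implicit in \Cref{lem:regular_algebraic_stack_equivalence} and amounts to smooth descent of regularity applied to a common smooth refinement.
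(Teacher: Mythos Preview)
Your proposal is correct and follows the same approach as the paper. The paper's proof is terse (``follows immediately from the definition of regular point'' for the first claim, and Tag 0GVZ plus stability of $\operatorname{reg}(U)$ under generalization for the second), while you have spelled out the step that makes the first claim work: upgrading the existential definition to a universal one via smooth descent (Tag 04YI), which the paper leaves implicit.
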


\begin{proof}
    The first statement follows immediately from the definition of a regular point. 
    The second statement follows by reasoning along the lines of the proof of  \Cref{lem:regular_algebraic_stack_equivalence}, i.e.\ that one can lift generalizations along smooth covers when the algebraic stack is decent \cite[\href{https://stacks.math.columbia.edu/tag/0GVZ}{Tag 0GVZ}]{stacks-project}, combined with that the regular locus of a locally Noetherian scheme is closed under generalization.
\end{proof}

We now come to the main proposition of this section.

\begin{proposition}
    \label{prop:closed_subset_regular_locus}
    Let $\mathcal{X}$ be a decent quasi-compact
    locally Noetherian algebraic stack. For any closed subset $Z\subseteq |\mathcal{X}|$, the following are equivalent:
    \begin{enumerate}
        \item\label{item:closedreg1} every point in $Z$ is regular, i.e.\ $Z\subseteq \operatorname{reg}(\mathcal{X})$,
        \item\label{item:closedreg2} every closed point in $Z$ is regular, 
        \item\label{item:closedreg3} $D^b_{\operatorname{coh},Z}(\mathcal{X})=\operatorname{Perf}_Z (\mathcal{X})$.
    \end{enumerate}
\end{proposition}

Before giving the proof of this proposition we include a proof of the analogous result for schemes due to the lack of finding a reference.

\begin{lemma}
    \label{lem:scheme_perf_Z=DbCohZ_iff_Zinreg}
    Let $X$ be a locally Noetherian scheme. Suppose $Z \subseteq X$ a closed subset. 
    Then $Z \subseteq \operatorname{reg}(X)$ if, and only if, $\operatorname{Perf}_{Z}(X) = D^b_{\operatorname{coh},Z}(X)$.
\end{lemma}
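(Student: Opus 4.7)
The plan is to verify both implications stalk-locally, with the Auslander--Buchsbaum--Serre theorem as the bridge between regularity of the local rings and perfectness of complexes. The only nontrivial input is that, on a locally Noetherian scheme, a complex with coherent cohomology is perfect if and only if every stalk is a perfect complex over the corresponding local ring (see e.g.\ \cite[\href{https://stacks.math.columbia.edu/tag/0685}{Tag 0685}]{stacks-project}).

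For the forward direction, assume $Z \subseteq \operatorname{reg}(X)$. Since $\operatorname{Perf}_Z(X) \subseteq D^b_{\operatorname{coh},Z}(X)$ always holds, I take an arbitrary $E \in D^b_{\operatorname{coh},Z}(X)$ and argue that $E$ is perfect by checking stalks. For $x \notin Z$ the stalk $E_x$ vanishes, so is trivially perfect. For $x \in Z$, the hypothesis gives that $\mathcal{O}_{X,x}$ is a regular local ring; by Auslander--Buchsbaum--Serre it has finite global dimension, so every object of $D^b(\operatorname{mod}(\mathcal{O}_{X,x}))$ is perfect, and in particular so is $E_x$.

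For the reverse direction, fix $x \in Z$; the task is to show that $\mathcal{O}_{X,x}$ is regular. The key construction is the reduced closed subscheme $\iota\colon \overline{\{x\}} \hookrightarrow X$, which exists and is a closed immersion of locally Noetherian schemes. The pushforward $M := \iota_\ast \mathcal{O}_{\overline{\{x\}}}$ is a coherent $\mathcal{O}_X$-module with $\operatorname{supp}(M) = \overline{\{x\}} \subseteq Z$, hence $M \in D^b_{\operatorname{coh},Z}(X)$. By the hypothesis $\operatorname{Perf}_Z(X) = D^b_{\operatorname{coh},Z}(X)$, the sheaf $M$ is perfect, so its stalk $M_x \cong \kappa(x)$ is a perfect complex of $\mathcal{O}_{X,x}$-modules. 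In particular $\kappa(x)$ has finite projective dimension over the Noetherian local ring $\mathcal{O}_{X,x}$, and another application of Auslander--Buchsbaum--Serre yields that $\mathcal{O}_{X,x}$ is regular, as desired.

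The main (mild) obstacle is really just locating the right coherent test object in the reverse direction; once one notes that $\iota_\ast \mathcal{O}_{\overline{\{x\}}}$ lives in $D^b_{\operatorname{coh},Z}(X)$ and has stalk $\kappa(x)$ at $x$, the whole argument reduces to the classical Auslander--Buchsbaum--Serre characterization of regular local rings.
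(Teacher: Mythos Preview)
Your proof is correct and the forward direction is essentially identical to the paper's. For the converse, the paper takes a slightly different route: it first restricts to a \emph{closed} point $x\in Z$, uses the skyscraper sheaf $k(x)$ (which is coherent precisely because $x$ is closed) as the test object, and then invokes that locally Noetherian schemes have enough closed points together with the fact that localizations of regular local rings remain regular to conclude for all points of $Z$. Your choice of test object $\iota_\ast\mathcal{O}_{\overline{\{x\}}}$ is more clever in that it works for an arbitrary $x\in Z$ directly, since $x$ is the generic point of $\overline{\{x\}}$ and hence the stalk there is $\kappa(x)$; this lets you bypass the closed-point reduction entirely. The paper's version trades that bit of cleverness for a more familiar test sheaf plus a standard density argument.
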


\begin{proof}
    First, assume $Z \subseteq \operatorname{reg}(X)$. Choose $E\in D^b_{\operatorname{coh},Z}(X)$. 
    Then, for any $x \in X$, either $E_{x}$ is perfect or zero (hence also perfect). This implies $E$ is perfect, since one can assume $X$ is affine as this is a local question and invoke \cite[Theorem 4.1]{Avramov/Iyengar/Lipman:2010}.
    Thus, $\operatorname{Perf}_{Z}(X) = D^b_{\operatorname{coh},Z}(X)$.

    For the converse direction, choose a closed point $x\in Z$.
    As the point is closed, the skyscraper sheaf $\kappa(x)$ (the pushforward of the residue field $\kappa(x):=\mathcal{O}_{X,x}/\mathfrak{m}_x$ along the closed immersion $\operatorname{Spec}(\kappa(x)) \to X$) is coherent. Hence, $\kappa(x)\in \operatorname{Perf}_Z(X)$ by our hypothesis. 
    By taking stalks, we see that $\kappa(x)$ has finite projective dimension as an $\mathcal{O}_{X,x}$-module. So, it follows that $x\in \operatorname{reg}(X)$ (see e.g.\ \cite[\href{https://stacks.math.columbia.edu/tag/00OC}{Tag 00OC}]{stacks-project}). 
    As locally Noetherian schemes have enough closed points by \cite[\href{https://stacks.math.columbia.edu/tag/02IL}{Tag 02IL}]{stacks-project} and the localization of a regular local ring remains regular, it follows that $Z \subseteq \operatorname{reg}(X)$.
\end{proof}

\begin{proof}[Proof of \Cref{prop:closed_subset_regular_locus}]
    We first show $\eqref{item:closedreg1}\implies\eqref{item:closedreg3}$. For this consider a smooth surjective morphism $f\colon U \to \mathcal{X}$ from a Noetherian scheme. By $\eqref{item:closedreg1}$ and \Cref{lem:regular_locus_stable_generalization}, it follows $f^{-1}(Z)\subseteq\operatorname{reg}(U)$. Consequently, by \Cref{lem:scheme_perf_Z=DbCohZ_iff_Zinreg}, it follows that $D^b_{\operatorname{coh},f^{-1}(Z)}(U)=\operatorname{Perf}_{f^{-1}(Z)} (U)$. Applying \Cref{lem:perfect_characterization} it follows that every object in $D^b_{\operatorname{coh},Z}(\mathcal{X})$ is perfect, which shows $\eqref{item:closedreg3}$ as desired.

    To see that $\eqref{item:closedreg3}\implies\eqref{item:closedreg2}$ fix a closed point $z\in Z$ and choose a smooth surjective morphism $U \to \mathcal{X}$ from an \emph{affine} Noetherian scheme. 
    Let $i\colon \mathcal{Z}_z \hookrightarrow \mathcal{X}$ be the residual gerbe at $z$ (which exists by \cite[\href{https://stacks.math.columbia.edu/tag/0H22}{Tag 0H22}]{stacks-project}). Consider the fibered square
    \begin{displaymath}
        \begin{tikzcd}[ampersand replacement=\&]
            \llap{$V:=$\;}{U\times_\mathcal{X} \mathcal{Z}_z} \& {\mathcal{Z}_z} \\
            U \& {\mathcal{X}}\rlap{ .}
            \arrow["{g}", from=1-1, to=1-2]
            \arrow["{j}"', hook, from=1-1, to=2-1]
            \arrow["i", from=1-2, hook, to=2-2]
            \arrow["f"', from=2-1, to=2-2]
        \end{tikzcd}
    \end{displaymath}
    Note that $i$ is a closed immersion from a regular algebraic stack \cite[\href{https://stacks.math.columbia.edu/tag/0H27}{Tag 0H27} \& \href{https://stacks.math.columbia.edu/tag/06MV}{Tag 06MV}]{stacks-project}. 
    Moreover, by base change, $g$ is a smooth surjective morphism and $j$ is a closed immersion.
    It follows that $Z$ is a (nonempty, as $|\mathcal{Z}_z|$ is a point) regular closed subscheme of $U$.

    Choose any closed point $v \in V$ and let $u:=j(v)$.
    As $f(u)=z$, to show $z$ is regular, it suffices, by definition, to show that $u\in U$ is regular.
    That is, we must show the residue field $\kappa(u):=\mathcal{O}_{U,u}/\mathfrak{m}_u=\mathcal{O}_{Z,z}/\mathfrak{m}_z=\kappa(z)$ is a perfect $\mathcal{O}_{U,u}$-module.
    As $v\in V$ is regular, we know that $\kappa(v)$ is a perfect $\mathcal{O}_{V,v}$-module. Thus it suffices to show that $\mathcal{O}_{V,v}$ is perfect as $\mathcal{O}_{U,u}$-module. 
    Now, $i_\ast \mathcal{O}_{\mathcal{Z}_z}$ is supported in $Z$, it is simply supported at $z\in Z$, and so is perfect by our assumption $\eqref{item:closedreg3}$.
    Consequently, using flat base change (see \cite[Corollary 4.13]{Hall/Rydh:2017}), it follows that 
    \begin{displaymath}
         \left(j_\ast \mathcal{O}_V \right)_u = \left(\mathbf{R} j_\ast \mathcal{O}_V \right)_u = \left(\mathbf{R}j_\ast\mathbf{L}g^\ast \mathcal{O}_{\mathcal{Z}_z} \right)_u = \left(\mathbf{L}f^\ast \mathbf{R}i_\ast \mathcal{O}_{\mathcal{Z}_z}\right)_u = \left(\mathbf{L}f^\ast i_\ast \mathcal{O}_{\mathcal{Z}_z}\right)_u
    \end{displaymath}
    is a perfect $\mathcal{O}_{U,u}$-module.
    However, this module is exactly $\mathcal{O}_{V,v}$ and so $z$ is regular as was to be shown.
    
    Lastly, $\eqref{item:closedreg2}\implies\eqref{item:closedreg1}$ follows from the fact the regular locus is closed under generalizations by \Cref{lem:regular_locus_stable_generalization} and that $\mathcal{X}$ has enough closed points (c.f.\ the proof of \Cref{lem:regular_algebraic_stack_equivalence}).
\end{proof}

\begin{corollary}
    \label{cor:regularity_in_terms_of_perfectness}
    Let $\mathcal{X}$ be a decent quasi-compact locally Noetherian algebraic stack. Then $\mathcal{X}$ is regular if, and only if, $\operatorname{Perf}(\mathcal{X})=D^b_{\operatorname{coh}}(\mathcal{X})$.
\end{corollary}

\begin{proof}
    This is a special case of \Cref{prop:closed_subset_regular_locus} with $Z=|\mathcal{X}|$.
\end{proof}

\begin{example}
    \label{ex:C2}
    Let $G$ be a finite group acting on a regular Noetherian ring $R$. Consider the quotient stack $\mathcal{X}:=[\operatorname{Spec}(R)/G]$.
    Recall that
    \begin{displaymath}
        D_{\operatorname{qc}}(\mathcal{X})\cong D(\operatorname{Mod}(RG))    
    \end{displaymath}
    where the latter is the category of left modules over the skew group ring $RG$. 
    As $R$ is regular, so is $\mathcal{X}$ (see \Cref{{def:regular_algebraic_stack}}) and thus $\operatorname{Perf}(\mathcal{X})= D^b_{\operatorname{coh}}(\mathcal{X})$.
    However, under the above equivalence $D_{\operatorname{qc}}(\mathcal{X})^c$ corresponds to the category of perfect $RG$-modules  $\operatorname{Perf}(RG)$ which generally will not equal $D^b(\operatorname{mod}(RG))$ (which corresponds to $D^b_{\operatorname{coh}}(\mathcal{X})$); e.g.\ take $R$ to be a field of positive characteristic dividing the order of $G$.
    (The perfect complexes over $\mathcal{X}$ correspond to complexes of $RG$-modules that are perfect as $R$-modules.)
\end{example}

\section{Properties of the regular locus}
\label{sec:openness_locus}

In this section, we extend to algebraic stacks results relating the existence of classical generators (of the bounded derived category) to the the openness/interior properties of the regular locus.
This was previously proven for schemes in \cite{Iyengar/Takahashi:2019,Dey/Lank:2024a}. We start with a few lemmas.

\begin{lemma}
    \label{lem:support_union_closed_ses}
    Let $\mathcal{X}$ be a Noetherian algebraic stack. Consider closed subsets  $Z$ and $Z^\prime$ of $|\mathcal{X}|$. 
    Suppose $E$ is a coherent sheaf over $\mathcal{X}$ with
    $\operatorname{supp}(E)= Z \cup Z^\prime$. 
    Then there exists a short exact sequence of coherent sheaves
    \begin{displaymath}
        0 \to A \to E \to B \to 0
    \end{displaymath}
    with $\operatorname{supp}(A)\subseteq Z^\prime$ and $ \operatorname{supp}(B)\subseteq Z$.
\end{lemma}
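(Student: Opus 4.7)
The plan is to mimic the module-theoretic argument: if a finitely generated module $M$ over a Noetherian ring has support in $V(I)\cup V(I')=V(II')$, then $(II')^nM=0$ for some $n$, so $A:=I^nM$ is annihilated by $I'^n$ while $M/A$ is annihilated by $I^n$. I would transplant this to stacks via a smooth atlas.

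First, let $\mathcal{J}$ and $\mathcal{J}'$ denote the (finite-type, quasi-coherent) ideal sheaves of the reduced closed substack structures on $Z$ and $Z'$, which exist because $\mathcal{X}$ is Noetherian. The main claim is:
\begin{equation*}
    \mathcal{J}^n\mathcal{J}'^n\cdot E=0 \quad \text{for some } n\geq 0.
\end{equation*}
To verify this, choose a smooth surjection $p\colon U\to \mathcal{X}$ with $U=\operatorname{Spec}(R)$ an affine Noetherian scheme (possible since $\mathcal{X}$ is quasi-compact). Writing $p^*E=\widetilde{M}$ for some finitely generated $R$-module $M$, and $p^{-1}Z=V(I)$, $p^{-1}Z'=V(I')$, the hypothesis $\operatorname{supp}(E)\subseteq Z\cup Z'$ gives $\operatorname{supp}(M)\subseteq V(I)\cup V(I')=V(II')$. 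By the Nullstellensatz (in the Noetherian setting), this forces $(II')^nM=I^nI'^nM=0$ for some $n$. Since $p$ is faithfully flat, the corresponding vanishing $p^*(\mathcal{J}^n\mathcal{J}'^nE)=0$ descends to $\mathcal{J}^n\mathcal{J}'^nE=0$ on $\mathcal{X}$.

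With the claim in hand, set $A:=\mathcal{J}^nE\subseteq E$ and $B:=E/A$. Both are coherent because $\mathcal{X}$ is Noetherian and $E$ is coherent. By construction $\mathcal{J}'^n\cdot A=0$, so after pulling back along any smooth cover one sees $\operatorname{supp}(p^*A)\subseteq V(p^*\mathcal{J}')=p^{-1}Z'$, and therefore $\operatorname{supp}(A)\subseteq Z'$. Similarly $\mathcal{J}^n\cdot B=0$ yields $\operatorname{supp}(B)\subseteq Z$, completing the desired short exact sequence $0\to A\to E\to B\to 0$.

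The only nontrivial point is ensuring that the ideal-theoretic argument makes sense on a stack: namely, that the reduced closed substack structures provide quasi-coherent ideal sheaves and that products $\mathcal{J}^n\mathcal{J}'^n$ and submodules $\mathcal{J}^nE$ behave as expected. This is all routine provided one verifies it on a smooth atlas, which is what the faithfully flat descent step above achieves; so I do not anticipate a serious obstacle.
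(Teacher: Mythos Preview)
Your argument is correct and is essentially the same approach as the paper's: the paper simply points to the scheme case in \cite[\href{https://stacks.math.columbia.edu/tag/01YD}{Tag 01YD}]{stacks-project} and remarks that the constructions there (which are exactly the ideal-power construction $A=\mathcal{J}^n E$, $B=E/\mathcal{J}^n E$ you wrote out) can be carried out on the stack and verified along a smooth cover. You have made that sketch explicit.
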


\begin{proof}
    In the case $\mathcal{X}$ is a scheme see e.g.\ \cite[\href{https://stacks.math.columbia.edu/tag/01YD}{Tag 01YD}]{stacks-project}. As for the general case, note that all constructions in loc.\ cit.\ can be done on an algebraic stack. Hence, the necessary checks can be reduced to the scheme case by looking along a smooth cover.
\end{proof}

\begin{lemma}
    \label{lem:closed_immersion_generates_support}
    Let $i\colon \mathcal{Z} \hookrightarrow \mathcal{X}$ be a closed immersion to a Noetherian algebraic stack $\mathcal{X}$. Then $D^b_{\operatorname{coh},|\mathcal{Z}|} ( \mathcal{X} )=\langle \mathbf{R} i_\ast D^b_{\operatorname{coh}}(\mathcal{Z}) \rangle$.
\end{lemma}
  
\begin{proof}
    The proof is identical to the scheme case (see e.g.\ \cite[Proposition 4.6]{Elagin/Lunts/Schnurer:2020} or \cite[Lemma 3.4]{Hall:2023}) using a suitable stacky versions of \cite[\href{https://stacks.math.columbia.edu/tag/01Y9}{Tag 01Y9} \& \href{https://stacks.math.columbia.edu/tag/087T}{Tag 087T}]{stacks-project}.
\end{proof}

\begin{lemma}\label{lem:classical_generation_from_irreducible_components}
    Let $\mathcal{X}$ be a Noetherian algebraic stack.  Suppose $|\mathcal{X}| = \cup_{r=1}^n |\mathcal{Z}_r|$ where $i_r\colon \mathcal{Z}_r \hookrightarrow \mathcal{X}$ are closed immersions such that each $D^b_{\operatorname{coh}}(\mathcal{Z}_r)$ admits a classical generator $G_r$. 
    Then $D^b_{\operatorname{coh}}(\mathcal{X})$ is classically generated by $G:=\oplus_{r=1}^n \mathbf{R} (i_r)_\ast G_r$.
\end{lemma}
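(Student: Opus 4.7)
The plan is to show the containment $D^b_{\operatorname{coh}}(\mathcal{X})\subseteq\langle G\rangle$, working in two stages: first, reduce everything to a statement about coherent sheaves, and second, decompose these by their support across the components $|\mathcal{Z}_r|$. The reverse containment $\langle G\rangle\subseteq D^b_{\operatorname{coh}}(\mathcal{X})$ is automatic since each $\mathbb{R}(i_{r,\operatorname{qc}})_{\ast}G_r$ lies in $D^b_{\operatorname{coh},|\mathcal{Z}_r|}(\mathcal{X})\subseteq D^b_{\operatorname{coh}}(\mathcal{X})$ (closed immersions have exact pushforwards preserving coherence).

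For the first reduction, any $E\in D^b_{\operatorname{coh}}(\mathcal{X})$ is built by an iterated extension argument, using truncation triangles $\tau^{\leq k}E\to\tau^{\leq k+1}E\to H^{k+1}(E)[-k-1]$, from finitely many of its cohomology sheaves; so it suffices to show $\operatorname{Coh}(\mathcal{X})\subseteq\langle G\rangle$. Next, I claim that $D^b_{\operatorname{coh},|\mathcal{Z}_r|}(\mathcal{X})\subseteq\langle G\rangle$ for each $r$. Indeed, $\mathbb{R}(i_{r,\operatorname{qc}})_\ast\colon D^b_{\operatorname{coh}}(\mathcal{Z}_r)\to D^b_{\operatorname{coh},|\mathcal{Z}_r|}(\mathcal{X})$ is a triangulated functor and hence sends the thick hull of $G_r$ to the thick hull of $\mathbb{R}(i_{r,\operatorname{qc}})_\ast G_r$; combined with \Cref{lem:closed_immersion_generates_support}, this gives
\begin{displaymath}
    D^b_{\operatorname{coh},|\mathcal{Z}_r|}(\mathcal{X})=\langle\mathbb{R}(i_{r,\operatorname{qc}})_\ast D^b_{\operatorname{coh}}(\mathcal{Z}_r)\rangle=\langle\mathbb{R}(i_{r,\operatorname{qc}})_\ast G_r\rangle\subseteq\langle G\rangle.
\end{displaymath}

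It remains to exhibit any $F\in\operatorname{Coh}(\mathcal{X})$ in the thick subcategory generated by the union of the $D^b_{\operatorname{coh},|\mathcal{Z}_r|}(\mathcal{X})$, which I will do by induction on $n$. The case $n=1$ is immediate since $\operatorname{supp}(F)\subseteq|\mathcal{X}|=|\mathcal{Z}_1|$. For the inductive step, set
\begin{displaymath}
    Z:=|\mathcal{Z}_n|\cap\operatorname{supp}(F)\quad\text{and}\quad Z':=\Bigl(\bigcup_{r<n}|\mathcal{Z}_r|\Bigr)\cap\operatorname{supp}(F),
\end{displaymath}
which are closed in $|\mathcal{X}|$ with $Z\cup Z'=\operatorname{supp}(F)$. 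Applying \Cref{lem:support_union_closed_ses} produces a short exact sequence $0\to A\to F\to B\to 0$ of coherent sheaves with $\operatorname{supp}(A)\subseteq Z'$ and $\operatorname{supp}(B)\subseteq Z\subseteq|\mathcal{Z}_n|$. Then $B\in D^b_{\operatorname{coh},|\mathcal{Z}_n|}(\mathcal{X})\subseteq\langle G\rangle$, while $A$ is coherent and supported in the union of the first $n-1$ components, so $A\in\langle G\rangle$ by the induction hypothesis (thinking of $A$ as a coherent sheaf on the closed substack cut out by $\cup_{r<n}\mathcal{Z}_r$, or simply by reapplying the same decomposition). The triangle then yields $F\in\langle G\rangle$.

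The main subtlety is the mild mismatch between the hypothesis of \Cref{lem:support_union_closed_ses}, which demands $\operatorname{supp}(F)$ equal $Z\cup Z'$, and the a priori inclusion $\operatorname{supp}(F)\subseteq\cup|\mathcal{Z}_r|$; this is handled by intersecting with $\operatorname{supp}(F)$ as above. Beyond this bookkeeping, everything is driven by two clean ingredients already in hand: that closed-immersion pushforward transports classical generators to classical generators of the essential image (\Cref{lem:closed_immersion_generates_support}), and that a coherent sheaf supported on a union of closed sets admits a two-step filtration by its restriction to each (\Cref{lem:support_union_closed_ses}).
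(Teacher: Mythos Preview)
Your proof is correct and follows essentially the same approach as the paper's: reduce to coherent sheaves via truncation, then use \Cref{lem:support_union_closed_ses} to peel off one component at a time and \Cref{lem:closed_immersion_generates_support} to handle each piece. The paper compresses the induction by reducing immediately to $n=2$, but the substance is identical.
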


\begin{proof}
    By induction on $n$, we may assume $n=2$. Note the case $n=1$ follows from \Cref{lem:closed_immersion_generates_support}.
    Choose $E\in D^b_{\operatorname{coh}}(\mathcal{X})$. As any object in $D^b_{\operatorname{coh}}(\mathcal{X})$ is finitely built by its cohomology sheaves, we may assume that $E$ is a coherent sheaf concentrated in degree zero. Next, by \Cref{lem:support_union_closed_ses}, we may assume that $E$ is either an object of $D^b_{\operatorname{coh},|\mathcal{Z}_1|}(\mathcal{X})$ or $D^b_{\operatorname{coh},|\mathcal{Z}_2|}(\mathcal{X})$ in which case the claim follows from \Cref{lem:closed_immersion_generates_support}.
\end{proof}
  
The following is the natural extension of the scheme theoretic notions, see e.g.\ \cite[\href{https://stacks.math.columbia.edu/tag/07P6}{Tag 07P6} \& \href{https://stacks.math.columbia.edu/tag/07R2}{Tag 07R2}]{stacks-project}.

\begin{definition}
    Let $\mathcal{X}$ be a locally Noetherian algebraic stack. 
    Then we say $\mathcal{X}$ is
    \begin{enumerate}
        \item \textbf{$J\textrm{-}0$} if the regular locus $\operatorname{reg}(\mathcal{X})$ contains a nonempty open,
        \item \textbf{$J\textrm{-}1$} if the regular locus $\operatorname{reg}(\mathcal{X})$ is open,
        \item \textbf{$J\textrm{-}2$} if for every morphism $\mathcal{Y}\to \mathcal{X}$ which is locally of finite type, the regular locus $\operatorname{reg}(\mathcal{Y})$ is open.
    \end{enumerate}
\end{definition}

\begin{lemma}
    \label{lem:existence_classical_generator_implies_open_cohomological_regular_locus}
    Let $\mathcal{X}$ be a Noetherian algebraic stack. If $D^b_{\operatorname{coh}}(\mathcal{X})$ admits a classical generator, then the regular locus $\operatorname{reg}(\mathcal{X})$ is open.
\end{lemma}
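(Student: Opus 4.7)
The plan is to use the classical generator to cut out an open substack $\mathcal{V}\subseteq\mathcal{X}$ on which the criterion of \Cref{thm:derived_characterization_regularity_stacks} is visibly satisfied, and then identify $\mathcal{V}$ with $\operatorname{reg}(\mathcal{X})$.

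Fix a classical generator $G\in D^b_{\operatorname{coh}}(\mathcal{X})$ and choose a smooth surjective morphism $f\colon U\to\mathcal{X}$ from a (necessarily Noetherian) scheme $U$. Set $H:=\mathbb{L}f^\ast_{\operatorname{qc}} G\in D^b_{\operatorname{coh}}(U)$ and put
\[
V:=\{u\in U \mid H_u\in\operatorname{Perf}(\mathcal{O}_{U,u})\}.
\]
Then $V$ is open in $U$, since the non-perfect locus of a bounded coherent complex on a Noetherian scheme is closed. I would first verify that $V$ is $f$-saturated: given $u,u'\in U$ with $f(u)=f(u')$ and $u\in V$, pick a point $w$ in the fiber product $W:=U\times_{\mathcal{X}}U$ lying over $(u,u')$ under the smooth projections $p_1,p_2$. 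Since $f\circ p_1\cong f\circ p_2$, one has $p_1^\ast H\cong p_2^\ast H$, so
\[
H_u\otimes_{\mathcal{O}_{U,u}}\mathcal{O}_{W,w}\cong H_{u'}\otimes_{\mathcal{O}_{U,u'}}\mathcal{O}_{W,w}
\]
is perfect over $\mathcal{O}_{W,w}$; faithful flatness of $\mathcal{O}_{U,u'}\to\mathcal{O}_{W,w}$ combined with \cite[Corollary 2.11]{Letz:2021} then descends perfection to $H_{u'}$, exactly as in \Cref{lem:independence_for_cohomologically_regular_definition}. Consequently $V=f^{-1}(\mathcal{V})$ for an open substack $\mathcal{V}\subseteq\mathcal{X}$, and \Cref{lem:perfect_characterization} gives that $G|_{\mathcal{V}}\in\operatorname{Perf}(\mathcal{V})$.

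Next I would argue that $G|_{\mathcal{V}}$ still classically generates $D^b_{\operatorname{coh}}(\mathcal{V})$. Any $E\in D^b_{\operatorname{coh}}(\mathcal{V})$ should extend to some $\widetilde{E}\in D^b_{\operatorname{coh}}(\mathcal{X})$: termwise by taking coherent subsheaves of the pushforwards along the quasi-compact quasi-separated open immersion $j\colon\mathcal{V}\hookrightarrow\mathcal{X}$ (using Noetherianity of $\mathcal{X}$), and then spreading out the differentials on common coherent extensions. Since $G$ classically generates we have $\widetilde{E}\in\langle G\rangle$, and hence $E\cong j^\ast\widetilde{E}\in\langle G|_{\mathcal{V}}\rangle$. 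Combined with perfection of $G|_{\mathcal{V}}$ this forces
\[
D^b_{\operatorname{coh}}(\mathcal{V})=\langle G|_{\mathcal{V}}\rangle\subseteq\operatorname{Perf}(\mathcal{V}).
\]
Since $\mathcal{V}$ is itself Noetherian (hence decent, quasi-compact and locally Noetherian), \Cref{thm:derived_characterization_regularity_stacks} applied to $\mathcal{V}$ yields that $\mathcal{V}$ is regular, i.e., $|\mathcal{V}|\subseteq\operatorname{reg}(\mathcal{X})$. The reverse inclusion is immediate: for any $x\in\operatorname{reg}(\mathcal{X})$ each $u\in f^{-1}(x)$ is a regular point of $U$, so $H_u$ is perfect and $x\in f(V)=|\mathcal{V}|$. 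Therefore $\operatorname{reg}(\mathcal{X})=|\mathcal{V}|$ is open.

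The main obstacle is the extension step used to transfer classical generation from $G$ to $G|_{\mathcal{V}}$: while the termwise extension of each coherent cohomology sheaf follows from Noetherianity of $\mathcal{X}$ and quasi-compactness of $j$, the spreading out of differentials between successive terms must be arranged compatibly---either by an inductive choice of sufficiently large coherent extensions on which each differential extends, or by identifying $j^\ast\colon D^b_{\operatorname{coh}}(\mathcal{X})\to D^b_{\operatorname{coh}}(\mathcal{V})$ as a Verdier quotient, for which essential surjectivity is automatic. By comparison, the $f$-saturation of $V$ is comparatively routine, being a direct rerun of the faithfully flat descent argument used to prove \Cref{lem:independence_for_cohomologically_regular_definition}.
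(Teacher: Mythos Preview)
Your proposal is correct and follows essentially the same strategy as the paper: define the locus $V\subseteq U$ where $\mathbb{L}f^\ast_{\operatorname{qc}}G$ is stalkwise perfect, identify its (open) image with $\operatorname{reg}(\mathcal{X})$, and invoke \Cref{thm:derived_characterization_regularity_stacks} for the nontrivial inclusion---the paper phrases this last step via cohomological regularity of the open substack rather than via $G|_{\mathcal{V}}$ being a perfect classical generator, but the content is identical. Your flagged ``main obstacle'' is not one: since every object of $D^b_{\operatorname{coh}}(\mathcal{V})$ is finitely built from its cohomology sheaves, it suffices to extend coherent \emph{sheaves} along $j$, which is standard for Noetherian stacks and is precisely what the paper uses (``restricting to an open is essentially surjective on coherent sheaves'').
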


\begin{proof}
    Suppose $D^b_{\operatorname{coh}}(\mathcal{X})=\langle G\rangle$ and let $f\colon U \to \mathcal{X}$ be a smooth surjective morphism from an affine scheme (which exists by \cite[\href{https://stacks.math.columbia.edu/tag/04YC}{Tag 04YC}]{stacks-project}). 
    Set 
    \begin{displaymath}
        S :=\{ u\in U \mid (\mathbf{L}f^\ast G)_u\text{ is a perfect $\mathcal{O}_{U,u}$-module} \}
    \end{displaymath}
    and note that it is generalization closed as the localisation of a perfect complex remains perfect.
    It follows from \cite[Proposition 3.5]{Letz:2021} that $S$ is an open subset, and so $f(S)$ is an open substack of $\mathcal{X}$.
    Therefore, it suffices to show that $f(S)= \operatorname{reg}(\mathcal{X})$.    
    For this observe that `$\supseteq$' follows from the definition of a regular point.
    To see `$\subseteq$', note that $G|_{f(S)}$ is a classical generator for $D^b_{\operatorname{coh}}(f(S))$ (restricting to an open substack is essentially surjective on coherent sheaves) and that $G|_{f(S)}$ is perfect by \Cref{lem:perfect_characterization}.
    Consequently, $D^b_{\operatorname{coh}}(f(S))=\operatorname{Perf}(f(S)$ and $f(S)$ is regular by \Cref{prop:closed_subset_regular_locus}, showing the required inclusion.
\end{proof}

For the next proposition, recall that a Noetherian algebraic stack $\mathcal{X}$ being integral is equivalent to $\mathcal{X}$ being reduced with $|\mathcal{X}|$ irreducible \cite[\href{https://stacks.math.columbia.edu/tag/0GWD}{Tag 0GWD}]{stacks-project}. 

\begin{proposition}
    \label{prop:j_condition_stacks}
    Let $\mathcal{X}$ be a concentrated Noetherian algebraic stack satisfying the $1$-Thomason condition. 
    Then the following are equivalent:
    \begin{enumerate}
        \item\label{item:jcond1} every integral closed substack $\mathcal{Z}$ of $\mathcal{X}$ is $J\textrm{-}0$,
        \item\label{item:jcond2} $D^b_{\operatorname{coh}}(\mathcal{Z})$ admits a classical generator for every integral closed substack $\mathcal{Z}$ of $\mathcal{X}$.
    \end{enumerate}
    Moreover, if any of these conditions hold, then $D^b_{\operatorname{coh}}(\mathcal{Y})$ admits a classical generator for any closed substack $\mathcal{Y}$ of $\mathcal{X}$.
\end{proposition}

\begin{proof}
    It is clear from  \Cref{lem:existence_classical_generator_implies_open_cohomological_regular_locus} that $\eqref{item:jcond2}\implies \eqref{item:jcond1}$. So, we check the converse. In fact, we show if $\eqref{item:jcond1}$ holds, then $D^b_{\operatorname{coh}}(\mathcal{Y})$ admits a classical generator for any closed substack $\mathcal{Y}$ of $\mathcal{X}$. This will be done by Noetherian induction. There is nothing to show when $\mathcal{Y}$ is empty, and so, we assume it is not.
    As $\mathcal{Y}$ is a Noetherian stack, $|\mathcal{Y}|$ has a finite number of irreducible closed components. Let us endow each of these with their reduced induced algebraic stack structure (\cite[\href{https://stacks.math.columbia.edu/tag/050C}{Tag 050C}]{stacks-project}). Then, by applying \Cref{lem:classical_generation_from_irreducible_components}, we may assume $\mathcal{Y}$ is integral. In such a case, the hypothesis ensures $\mathcal{Y}$ is $J\textrm{-}0$. 
    
    Observe, if $\mathcal{Y}$ is regular, that $D^b_{\operatorname{coh}}(\mathcal{Y})=\operatorname{Perf}(\mathcal{Y})$ admits a classical generator. Indeed, closed substacks of $1$-Thomason stacks are again $1$-Thomason (see e.g.\ \cite[Observation 5.6]{Neeman:2023b} or \cite[Lemma 4.4]{Lank:2026}). So, we may assume $\mathcal{Y}$ is not regular. Then, by the $J\textrm{-}0$ hypothesis, there exists a nonempty open immersion $j\colon \mathcal{U} \to \mathcal{Y}$ with $|\mathcal{U}|\subseteq \operatorname{reg}(\mathcal{Y})\neq |\mathcal{Y}|$. Here, $\mathcal{U}$ is regular, and so $D^b_{\operatorname{coh}}(\mathcal{U})$ admits a classical generator by the previous reasoning. 
    Let $\mathcal{Z} \subseteq \mathcal{X}$ be the closed algebraic stack obtained by endowing the closed subset $|\mathcal{X}|\setminus |\mathcal{U}|$ with its reduced induced structure.
    By the induction hypothesis, 
    we know that $D^b_{\operatorname{coh}}(\mathcal{Z})$ admits a classical generator. Hence, \Cref{lem:classical_generation_from_irreducible_components} ensures that $D^b_{\operatorname{coh},|\mathcal{Z}|}(\mathcal{X})$ admits a classical generator. Now, from \cite[Lemma B.1]{Hall/Lamarche/Lank/Peng:2025}, there is a Verdier localization sequence
    \begin{displaymath}
        D^b_{\operatorname{coh},|\mathcal{Z}|}(\mathcal{X}) \longrightarrow D^b_{\operatorname{coh}}(\mathcal{X}) \xrightarrow{\mathbf{L}j^\ast} D^b_{\operatorname{coh}}(\mathcal{U}).
    \end{displaymath}
    Consequently, it follows that $D^b_{\operatorname{coh}}(\mathcal{X})$ admits a classical generator (see e.g.\ \cite[Proposition 4.3]{Bergh/Lunts/Schnurer:2016}).   
\end{proof}

\begin{remark}
    In addition, the conditions of the theorem are equivalent to same statement of $\eqref{item:jcond2}$, but with $D_{\operatorname{sg}}(\mathcal{Z}):=D^b_{\operatorname{coh}}(\mathcal{Z})/\operatorname{Perf}(\mathcal{Z})$, the singularity category of $\mathcal{Z}$, instead of $D^b_{\operatorname{coh}}(\mathcal{Z})$. This follows from abstract nonsense using that the singularity category is a Verdier quotient (and using the fact that by the $1$-Thomason conditions $\operatorname{Perf}(\mathcal{Z})$ admits a classical generator).
\end{remark}

As a corollary we obtain a generalization of \cite[Theorem 4.15]{Elagin/Lunts/Schnurer:2020} in the setting of algebraic stacks.

\begin{corollary}
    \label{cor:ELS_j_conditions}
    Let $\mathcal{X}$ be a Noetherian $J\textrm{-}2$ concentrated algebraic stack with quasi-finite and separated diagonal. 
    Then $D^b_{\operatorname{coh}}(\mathcal{X})$ admits a classical generator.
\end{corollary}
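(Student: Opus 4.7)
The plan is to reduce everything to \Cref{thm:j_condition_stacks} and conclude via its ``moreover'' clause applied to $\mathcal{Y}=\mathcal{X}$ (viewed as a closed substack of itself). The hypotheses of that theorem are essentially immediate from those of the corollary: $\mathcal{X}$ is concentrated and Noetherian by assumption, and since it is quasi-compact with quasi-finite and separated diagonal it satisfies the Thomason condition, as recalled in \Cref{sec:prelim_algebraic_stacks_thomason} via \cite[Theorem A]{Hall/Rydh:2017}. All that remains is to verify condition \eqref{item:jcond1}, i.e.\ that every integral closed substack $\mathcal{Z}\hookrightarrow\mathcal{X}$ is $J$-0.

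Given such a $\mathcal{Z}$, the closed immersion $\mathcal{Z}\hookrightarrow\mathcal{X}$ is of finite type (as $\mathcal{X}$ is Noetherian), so the $J$-2 assumption forces $\operatorname{reg}(\mathcal{Z})$ to be open in $|\mathcal{Z}|$. Thus it suffices to show this locus is non-empty. For this, pick a smooth surjective morphism $f\colon U\to\mathcal{Z}$ from a scheme; since $\mathcal{Z}$ is integral (hence reduced) and reducedness is smooth-local, $U$ is a Noetherian reduced scheme. Its regular locus contains the generic points of the finitely many irreducible components of $U$, as the localization of a Noetherian reduced ring at a minimal prime is a field and hence a regular local ring. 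By \Cref{lem:regular_locus_stable_generalization}, $\operatorname{reg}(\mathcal{Z})=f(\operatorname{reg}(U))$ is therefore non-empty, establishing the $J$-0 property.

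With condition \eqref{item:jcond1} of \Cref{thm:j_condition_stacks} verified, the ``moreover'' clause yields a classical generator for $D^b_{\operatorname{coh}}(\mathcal{Y})$ for every closed substack $\mathcal{Y}\subseteq\mathcal{X}$, and in particular for $\mathcal{X}$ itself. No real obstacle is expected, since the hard work is absorbed into \Cref{thm:j_condition_stacks}; the proof is just a matter of translating the $J$-2 hypothesis into the required $J$-0 statement on integral closed substacks together with an elementary non-emptiness check for the regular locus.
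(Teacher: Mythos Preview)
Your proposal is correct and matches the paper's intended approach: the corollary is stated without proof precisely because it follows from \Cref{thm:j_condition_stacks} once one observes that the $J$-2 hypothesis forces every integral closed substack to be $J$-0, and you have filled in exactly that verification. The only cosmetic point is that when you pick the smooth cover $U\to\mathcal{Z}$ you should take it affine (or at least quasi-compact) to ensure $U$ is genuinely Noetherian as you claim, but this is harmless since such covers exist and in any case a single regular point on a locally Noetherian reduced cover already suffices.
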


\begin{proof}
    As $\mathcal{X}$ is $J\textrm{-}2$, every closed substack has open regular locus, and so the claim follows from \Cref{prop:j_condition_stacks}.
\end{proof}

\section{Characterization via strong generators}
\label{sec:detect_via_strong_generator}

This section proves an analogue of \cite[Theorem 3.1.4]{Bondal/VandenBergh:2003} and \cite[Theorem 0.5]{Neeman:2021a} for `nice' Noetherian algebraic stacks. We start with some lemmas and recall some terminology along the way.
Recall a Noetherian algebraic stack has \textbf{finite Krull dimension} if there exists a smooth surjective morphisms from a quasi-compact scheme of finite Krull dimension. It can be verified that this is independent of chosen cover using \cite[\href{https://stacks.math.columbia.edu/tag/0AFF}{Tag 0AFF} and \href{https://stacks.math.columbia.edu/tag/0AFI}{Tag 0AFI}]{stacks-project}. 
However, for the convenience of reader, we spell out the details and relate this with the notion of Krull dimension for a locally Noetherian algebraic stack as defined in \cite[\href{https://stacks.math.columbia.edu/tag/0AFP}{Tag 0AFP}]{stacks-project}.

\begin{lemma}
    \label{lem:stack_krull_dimension_finiteness_Noetherian}
    Let $\mathcal{X}$ be a quasi-compact locally Noetherian algebraic stack. Then the following are equivalent:
    \begin{enumerate}
        \item \label{lem:stack_krull_dimension_finiteness_Noetherian2} for every smooth surjective morphisms $U \to \mathcal{X}$ from a quasi-compact scheme, the scheme has finite Krull dimension,
        \item \label{lem:stack_krull_dimension_finiteness_Noetherian3} there exists a smooth surjective morphisms $U \to \mathcal{X}$ from a quasi-compact scheme of finite Krull dimension,
        \item \label{lem:stack_krull_dimension_finiteness_Noetherian1} $\mathcal{X}$ has finite Krull dimension, i.e.\ $\dim (\mathcal{X})$ as defined in \cite[\href{https://stacks.math.columbia.edu/tag/0AFP}{Tag 0AFP}]{stacks-project} is finite.
    \end{enumerate}
\end{lemma}

\begin{proof}
    Clearly $\eqref{lem:stack_krull_dimension_finiteness_Noetherian2}\implies \eqref{lem:stack_krull_dimension_finiteness_Noetherian3}$, whereas \cite[\href{https://stacks.math.columbia.edu/tag/0DRP}{Tag 0DRP}]{stacks-project} tells us $\eqref{lem:stack_krull_dimension_finiteness_Noetherian3} \implies \eqref{lem:stack_krull_dimension_finiteness_Noetherian1}$. 
    So, it rests to show $\eqref{lem:stack_krull_dimension_finiteness_Noetherian1}\implies \eqref{lem:stack_krull_dimension_finiteness_Noetherian2}$. 
    Consider a smooth surjective morphism $f\colon U \to \mathcal{X}$ from a quasi-compact scheme. 
    By \cite[\href{https://stacks.math.columbia.edu/tag/0DRQ}{Tag 0DRQ}]{stacks-project}, the function $F\colon|U|\to \mathbb{Z}$ given by $t\mapsto \dim_u (U_{f(u)})$ is locally constant. The `relative dimension' $\dim_u (U_{f(u)})$ is defined in \cite[\href{https://stacks.math.columbia.edu/tag/0DRG}{Tag 0DRG}]{stacks-project}. (That the function attains values in $\mathbb{Z}$ follows, e.g., from combining \cite[\href{https://stacks.math.columbia.edu/tag/02G1}{Tag 02G1}, \href{https://stacks.math.columbia.edu/tag/0DRG}{Tag 0DRG} (3) and \href{https://stacks.math.columbia.edu/tag/0DRN}{Tag 0DRN}]{stacks-project}.)
    As $|U|$ is quasi-compact, we know that $F$ attains only a finite number of values, and so, there exists an integer $N$ with $ F (u) \leq N$ for all $u \in |U |$.
    Consequently, by \cite[\href{https://stacks.math.columbia.edu/tag/0DRP}{Tag 0DRP}]{stacks-project}, we see that $U$ has finite Krull dimension.
\end{proof}

\begin{lemma}
    \label{lem:descent_strong_oplus_bounded_coherent_stacks}
    Let $f\colon \mathcal{Y} \to \mathcal{X}$ be a morphism of finite type between concentrated separated Noetherian algebraic stacks. 
    If $D_{\operatorname{qc}}(\mathcal{Y})$ admits a strong $\oplus$-generator with bounded and coherent cohomology, then so does $D_{\operatorname{qc}}(\mathcal{X})$.
\end{lemma}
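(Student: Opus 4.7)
The plan is to transfer a strong $\oplus$-generator from $\mathcal{Y}$ down to $\mathcal{X}$ via the derived pushforward $\mathbb{R}(f_{\operatorname{qc}})_\ast$. Fix $G \in D^b_{\operatorname{coh}}(\mathcal{Y})$ with $D_{\operatorname{qc}}(\mathcal{Y}) = \overline{\langle G \rangle}_n$ for some $n \geq 1$, and take $G' := \mathbb{R}(f_{\operatorname{qc}})_\ast G$ as the candidate strong $\oplus$-generator on $\mathcal{X}$.

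Since $\mathcal{Y}$ and $\mathcal{X}$ are both concentrated, $f$ is itself concentrated, so \cite[Theorem 2.6]{Hall/Rydh:2017} tells us that $\mathbb{R}(f_{\operatorname{qc}})_\ast$ preserves $D_{\operatorname{qc}}$, has bounded cohomological amplitude, and preserves small coproducts. Boundedness of $G'$ follows immediately, and coherence of the cohomology of $G'$ will come from the finite-type hypothesis on $f$ together with the usual concentrated/coherence technology (ultimately reducing, via a smooth scheme cover, to the classical proper/coherent-pushforward statement on a scheme).

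Next, because $\mathbb{R}(f_{\operatorname{qc}})_\ast$ commutes with small coproducts, shifts, cones, and direct summands, it carries $\overline{\langle G \rangle}_n$ into $\overline{\langle G' \rangle}_n$. Applied to $\mathbb{L}f^\ast_{\operatorname{qc}} E \in D_{\operatorname{qc}}(\mathcal{Y}) = \overline{\langle G \rangle}_n$ for any $E \in D_{\operatorname{qc}}(\mathcal{X})$, this gives $\mathbb{R}(f_{\operatorname{qc}})_\ast \mathbb{L}f^\ast_{\operatorname{qc}} E \in \overline{\langle G' \rangle}_n$. The final step is to pass from $\mathbb{R}(f_{\operatorname{qc}})_\ast \mathbb{L}f^\ast_{\operatorname{qc}} E$ back to $E$ through the adjunction unit, using the structure of $f$ in the intended applications (e.g.\ a smooth cover in the proof of \Cref{thm:stacky_bondal_vandenbergh}) — typically via a projection-formula/descent argument or a \v{C}ech-type tower built from fibre products of $f$ with itself — so that $E$ lies in $\overline{\langle G' \rangle}_{n'}$ for some $n'$ depending only on $n$ and $f$, not on $E$.

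The main obstacle is precisely this last step: keeping the number $n'$ of cones uniformly bounded, since each layer of a \v{C}ech-type resolution a priori inflates the count. Everything preceding this is formal and flows from the concentratedness of $f$ and the good behaviour of $\mathbb{R}(f_{\operatorname{qc}})_\ast$ on $D_{\operatorname{qc}}$; the real content lies in the uniform bound that lets the pullback-pushforward comparison terminate in finitely many steps.
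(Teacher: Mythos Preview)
Your proposal has a genuine gap at the very start: you claim $G' := \mathbb{R}(f_{\operatorname{qc}})_\ast G$ lies in $D^b_{\operatorname{coh}}(\mathcal{X})$, appealing to ``the classical proper/coherent-pushforward statement''. But $f$ is only assumed to be of \emph{finite type}, not proper, and finite-type pushforward does not preserve coherence (think of $\mathbb{A}^1 \to \operatorname{Spec}(k)$, where the pushforward of $\mathcal{O}$ is $k[x]$, which is not coherent as a $k$-module). So your candidate generator need not have coherent cohomology at all. The paper addresses exactly this by \emph{not} taking $\mathbb{R}(f_{\operatorname{qc}})_\ast H$ as the generator: instead it invokes \cite[Lemma~6.8]{DeDeyn/Lank/ManaliRahul:2025} to produce a genuine $G \in D^b_{\operatorname{coh}}(\mathcal{X})$ together with an integer $l$ such that $\mathbb{R}(f_{\operatorname{qc}})_\ast H \in \overline{\langle G \rangle}_l$. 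This extra step is essential and is not visible in your outline.

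For the descent step you flag as the ``main obstacle'', the paper does not run a \v{C}ech argument by hand; it quotes \cite[Theorem~6.3]{Hall/Lamarche/Lank/Peng:2025}, which gives directly an integer $n$ with $D_{\operatorname{qc}}(\mathcal{X}) = \overline{\langle \mathbb{R}(f_{\operatorname{qc}})_\ast D_{\operatorname{qc}}(\mathcal{Y}) \rangle}_n$. Note this is a statement about the \emph{entire} pushforward image, not about $\mathbb{R}(f_{\operatorname{qc}})_\ast \mathbb{L}f^\ast_{\operatorname{qc}} E$ for individual $E$; combined with $D_{\operatorname{qc}}(\mathcal{Y}) = \overline{\langle H \rangle}_m$ and the previous paragraph, one gets $D_{\operatorname{qc}}(\mathcal{X}) = \overline{\langle G \rangle}_{mnl}$ immediately. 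Your unit/\v{C}ech sketch is in the right spirit for such a theorem, but as written it presupposes that $f$ is something like a faithfully flat cover, which is not part of the hypothesis of the lemma; the cited black-box result is what carries the actual weight here.
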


\begin{proof}
    First, note $f$ is concentrated by \cite[Lemma 1.5(4)]{Hall/Rydh:2017}.
    From \cite[Theorem 6.3]{Hall/Lamarche/Lank/Peng:2025}, there exists an integer $n\geq 0$ with $ D_{\operatorname{qc}}(\mathcal{X}) = \overline{\langle \mathbf{R}f_{\ast} D_{\operatorname{qc}}(\mathcal{Y}) \rangle}_n$.
    Moreover, our assumption ensures there exists an $H\in D_{\operatorname{coh}}^b (\mathcal{Y})$ and $m\geq 0$ with $D_{\operatorname{qc}}(\mathcal{Y}) = \overline{\langle H \rangle}_m$. 
    Lastly, it follows from \cite[Lemma 6.8]{DeDeyn/Lank/ManaliRahul:2025} that there exists a $G\in D^b_{\operatorname{coh}}(\mathcal{X})$ and $l\geq 0$ with $\mathbf{R}f_{\ast} H \in \overline{\langle G \rangle}_l$.  
    Putting this all together, we see that $D_{\operatorname{qc}}(\mathcal{X}) = \overline{\langle G \rangle}_{mnl}$, which completes the proof.
\end{proof}

The main result of this section requires a technical condition on the algebraic stack known as `satisfies approximation by compacts'.
In \cite{Hall/Lamarche/Lank/Peng:2025} a stacky analogue of Lipman--Neeman's \cite{Lipman/Neeman:2007} approximation by perfect complexes was introduced (see also  or \cite[\href{https://stacks.math.columbia.edu/tag/08EL}{Tag 08EL} \& \href{https://stacks.math.columbia.edu/tag/08HH}{Tag 08HH}]{stacks-project} for the case of schemes and algebraic spaces).
A subtlety is that compacts and perfect objects do not coincide in general for algebraic stacks; however this does not matter for concentrated algebraic stacks. 
It is shown in \cite[Corollary 5.4]{Hall/Lamarche/Lank/Peng:2025} that algebraic stacks with quasi-finite and separated diagonal satisfy approximation by compacts. We refer the reader to loc.\ cit.\ for details.

\begin{lemma}
    \label{lem:perf_strong_implies_dbcoh=perf}
    Let $\mathcal{X}$ be a concentrated Noetherian algebraic stack satisfying approximation by compacts.
    For any $G\in D^b_{\operatorname{coh}}(\mathcal{X})$ and $n\geq 0$, the following are equivalent:
    \begin{enumerate}
        \item $D^b_{\operatorname{coh}}(\mathcal{X}) = \langle G\rangle_n$,
        \item $\operatorname{Perf}(\mathcal{X}) \subseteq \langle G\rangle_n$.
    \end{enumerate}
\end{lemma}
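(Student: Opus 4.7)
The forward direction is immediate: $\operatorname{Perf}(\mathcal{X})\subseteq D^b_{\operatorname{coh}}(\mathcal{X})$, so equality in (1) automatically gives the containment in (2).

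For the reverse direction, my plan is to argue that hypothesis (2) in fact forces $\mathcal{X}$ to be regular. Once that is known, \Cref{thm:derived_characterization_regularity_stacks} (applicable as any Noetherian algebraic stack is automatically decent and quasi-compact) gives $D^b_{\operatorname{coh}}(\mathcal{X})=\operatorname{Perf}(\mathcal{X})$, and combined with (2) we obtain
\begin{displaymath}
    D^b_{\operatorname{coh}}(\mathcal{X}) = \operatorname{Perf}(\mathcal{X}) \subseteq \langle G\rangle_n \subseteq D^b_{\operatorname{coh}}(\mathcal{X}),
\end{displaymath}
forcing the desired equality; the last inclusion is automatic since $G\in D^b_{\operatorname{coh}}(\mathcal{X})$ and $D^b_{\operatorname{coh}}(\mathcal{X})$ is thick in $D(\mathcal{X})$.

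The main tool for extracting regularity from (2) is approximation by compacts, which, since $\mathcal{X}$ is concentrated, is approximation by perfect complexes (perfect $=$ compact by \cite[Lemma 4.4.(3)]{Hall/Rydh:2017}). By \Cref{prop:regular_point_iff_cohomologically_regular} combined with \Cref{lem:regular_algebraic_stack_equivalence}, regularity of $\mathcal{X}$ reduces to verifying cohomological regularity at each closed point $x\in|\mathcal{X}|$. So fix a closed point $x$, a smooth cover $f\colon U\to\mathcal{X}$ from a scheme, and a coherent sheaf $F$ with $\operatorname{supph}(F)\subseteq\overline{\{x\}}$; approximation by compacts supplies a perfect complex $C$ and a morphism $C\to F$ whose cone is bounded coherent and concentrated in a narrow band of low cohomological degrees. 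Under hypothesis (2) we have $C\in\langle G\rangle_n$, and the plan is to leverage the bounded amplitude of $G$ together with the structural constraints of $\langle G\rangle_n$ (built using only shifts, finite coproducts, summands, and at most $n-1$ cones) to force the stalks of $F$ along the smooth cover to have finite projective dimension, thereby establishing cohomological regularity at $x$.

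The hard part is precisely this last step: translating the global cone-count bound inside $\langle G\rangle_n$ into a uniform local projective-dimension bound at stalks. Morally, this is the scheme-theoretic content of Neeman's argument in \cite{Neeman:2021a} / Bondal--Van den Bergh \cite{Bondal/VandenBergh:2003}, and the cleanest approach is to pull hypothesis (2) back along the smooth cover $f$, reducing to a statement about a Noetherian scheme, and then to invoke the Auslander--Buchsbaum--Serre characterization at a closed point $u\in f^{-1}(x)$ to conclude regularity of $\mathcal{O}_{U,u}$.
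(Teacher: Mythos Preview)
Your approach has a fundamental flaw: the claim that hypothesis (2) forces $\mathcal{X}$ to be regular is false. For a counterexample, take any singular separated scheme $X$ of finite type over a field; by classical results (e.g.\ \cite{Rouquier:2008}) its category $D^b_{\operatorname{coh}}(X)$ admits a strong generator $G$, so $\operatorname{Perf}(X) \subseteq D^b_{\operatorname{coh}}(X) = \langle G \rangle_n$ for some $n$, yet $X$ is not regular. The lemma only requires $G\in D^b_{\operatorname{coh}}(\mathcal{X})$, not $G\in\operatorname{Perf}(\mathcal{X})$, so condition (2) on its own carries no regularity information whatsoever. Your ``hard step'' of extracting a local projective-dimension bound at stalks from the global cone count cannot succeed in general, because in the situation above there is simply nothing to extract.

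The paper's proof is direct and does not pass through regularity at all: it invokes \cite[Proposition 3.5]{Lank/Olander:2024}, whose only input is that compact (here $=$ perfect, since $\mathcal{X}$ is concentrated) objects approximate $D^b_{\operatorname{coh}}(\mathcal{X})$ in the sense supplied by the approximation-by-compacts hypothesis. That proposition then shows, via a truncation argument exploiting the bounded cohomological amplitude of $G$ together with closure of $\langle G \rangle_n$ under direct summands, that every $E \in D^b_{\operatorname{coh}}(\mathcal{X})$ already lies in $\langle G \rangle_n$ once all perfects do---regardless of whether $\mathcal{X}$ is regular.
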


\begin{proof}
    This follows immediately from \cite[Proposition 3.5]{Lank/Olander:2025} as approximation by compacts implies that `compact objects approximate $D^b_{\operatorname{coh}}(\mathcal{X})$' in the sense of loc.\ cit. A key point is that concentratedness of $\mathcal{X}$ ensures compact objects of $D_{\operatorname{qc}}(\mathcal{X})$ coincides with $\operatorname{Perf}(\mathcal{X})$. 
\end{proof}

\begin{theorem}
    \label{thm:stacky_bondal_vandenbergh}
    Let $\mathcal{X}$ be a concentrated separated Noetherian algebraic stack of finite Krull dimension satisfying approximation by compacts. 
    Then $\mathcal{X}$ is regular if, and only if, $\operatorname{Perf} (\mathcal{X})$ admits a strong generator.
\end{theorem}

\begin{proof}
    First, we assume that $\mathcal{X}$ is regular. By hypothesis, there exists a smooth surjective morphism from a regular affine scheme of finite Krull dimension to $\mathcal{X}$ (see \Cref{lem:stack_krull_dimension_finiteness_Noetherian}). It is known that for any regular Noetherian ring of finite Krull dimension its derived category admits a strong $\oplus$-generator which is a perfect complex (see \Cref{ex:regular_ring_strong_generator}). So, by \Cref{lem:descent_strong_oplus_bounded_coherent_stacks} and \Cref{cor:regularity_in_terms_of_perfectness}, it follows that 
    \begin{displaymath}
        D^b_{\operatorname{coh}}(\mathcal{X})=\operatorname{Perf}(\mathcal{X})=D_{\operatorname{qc}}(\mathcal{X})^c
    \end{displaymath}
    admits a strong generator.
    Lastly, we check the converse. In such a case, \Cref{lem:perf_strong_implies_dbcoh=perf} tells us $\operatorname{Perf}(\mathcal{X})$ admitting a strong generator implies $D^b_{\operatorname{coh}}(\mathcal{X})=\operatorname{Perf}(\mathcal{X})$. Consequently, \Cref{cor:regularity_in_terms_of_perfectness} ensures $\mathcal{X}$ is regular, which completes the proof.
\end{proof}

\begin{remark}
    \hfill
    \begin{enumerate}
        \item It follows from the proof, with $\mathcal{X}$ as in the theorem \emph{and} regular, that $D_{\operatorname{qc}}(\mathcal{X})$ admits a compact=perfect strong $\oplus$-generator.
        Also, note that approximation by compacts is only needed for one direction to apply \Cref{lem:perf_strong_implies_dbcoh=perf}.
        \item Ideally, one can show that $\operatorname{Perf}(\mathcal{X})$ strongly generated for a Noetherian algebraic stack automatically implies that the stack has finite Krull dimension (just as what happens in the scheme case). Unfortunately, we do not see how at the moment.
    \end{enumerate}
\end{remark}

The following is a reformulation of \Cref{thm:stacky_bondal_vandenbergh} more in line with the statement in \cite[Theorem 0.5]{Neeman:2021a}. 

\begin{corollary}
    \label{cor:stacky_neeman}
    Let $\mathcal{X}$ be as in \Cref{thm:stacky_bondal_vandenbergh}.
    Then $\operatorname{Perf}(\mathcal{X})$ admits a strong generator if, and only if, for every $x\in |\mathcal{X}|$ there exists a smooth morphism $f\colon \operatorname{Spec}(R_x) \to X$ with $R_x$ a (Noetherian) ring of finite global dimension whose image contains $x$.
\end{corollary}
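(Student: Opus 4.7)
The plan is to invoke \Cref{thm:stacky_bondal_vandenbergh} to translate the hypothesis that $\operatorname{Perf}(\mathcal{X})$ admits a strong generator into the geometric condition that $\mathcal{X}$ is regular. After this reduction the corollary amounts to showing that regularity of $\mathcal{X}$ is equivalent to the stated pointwise smooth-cover condition, and the main inputs will be the classical Auslander--Buchsbaum--Serre theorem together with \Cref{lem:regular_algebraic_stack_equivalence}.

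For the forward direction, I would begin from a smooth surjection $U\to\mathcal{X}$ from a regular scheme $U$, available by regularity of $\mathcal{X}$. Given $x\in|\mathcal{X}|$, choose an affine open $\operatorname{Spec}(R)\subseteq U$ whose image in $\mathcal{X}$ contains $x$. Then $R$ is a regular Noetherian ring (Noetherianity being inherited from $\mathcal{X}$ through the smooth morphism), and the composition $\operatorname{Spec}(R)\to\mathcal{X}$ is still smooth. To conclude that $R$ has finite global dimension, by Auslander--Buchsbaum--Serre it suffices to bound $\dim R$. For this I would use the smoothness dimension formula $\dim \mathcal{O}_{\operatorname{Spec}(R),v}=\dim_{f(v)}\mathcal{X}+d_v$ together with the fact that a smooth morphism has locally, hence by quasi-compactness of $\operatorname{Spec}(R)$ globally, bounded relative dimension; this yields $\dim R\leq \dim\mathcal{X}+d<\infty$.

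For the converse, I would pick any $x\in|\mathcal{X}|$ and the given smooth $f\colon \operatorname{Spec}(R)\to\mathcal{X}$ with $x$ in its image and $R$ of finite global dimension. Since $\mathcal{X}$ is locally Noetherian, so is $R$, and being affine it is Noetherian; a Noetherian ring of finite global dimension is regular by Auslander--Buchsbaum--Serre. Choosing $v\in\operatorname{Spec}(R)$ over $x$, the localization $\mathcal{O}_{\operatorname{Spec}(R),v}$ is then a regular local ring, so $x$ is a regular point of $\mathcal{X}$ by definition. As $x$ was arbitrary, \Cref{lem:regular_algebraic_stack_equivalence} delivers regularity of $\mathcal{X}$.

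The only slightly subtle step I anticipate is in the forward direction, namely ensuring that the chosen affine chart $\operatorname{Spec}(R)$ has globally (not merely locally) finite Krull dimension --- Nagata's classical example reminds us that this does not come for free for regular Noetherian rings --- but the quasi-compactness of $\operatorname{Spec}(R)$ combined with the smoothness bound sketched above dispatches this cleanly.
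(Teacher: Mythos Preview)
Your proposal is correct and is precisely the intended unpacking of the corollary: the paper states it without proof, merely calling it ``a reformulation of \Cref{thm:stacky_bondal_vandenbergh} more in line with the statement in \cite[Theorem 0.5]{Neeman:2021a}'', and your argument---reducing via \Cref{thm:stacky_bondal_vandenbergh} to the equivalence between regularity of $\mathcal{X}$ and the pointwise smooth-chart condition, then invoking Auslander--Buchsbaum--Serre together with the dimension bound coming from finite Krull dimension of $\mathcal{X}$---is exactly what that reformulation amounts to. The care you take with the finiteness of $\dim R$ is appropriate and matches the implicit use of the same fact in the paper's proof of \Cref{thm:stacky_bondal_vandenbergh} (``a regular affine scheme of finite Krull dimension'').
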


\begin{proof}
     First suppose $\operatorname{Perf}(\mathcal{X})$ admits a strong generator, then \Cref{thm:stacky_bondal_vandenbergh} says $\mathcal{X}$ is regular.
     Let $\operatorname{Spec}(R) \to \mathcal{X}$ be a smooth surjective morphism from an affine scheme of finite Krull dimension (exists by e.g.\ \Cref{lem:stack_krull_dimension_finiteness_Noetherian}). Then $\operatorname{Spec}(R)$ must also be regular and, consequently, $R$ has finite global dimension. 
    
    Conversely, the condition implies any point $x\in|\mathcal{X}|$ is regular, by definition, since $\operatorname{Spec}( R_x)$ is regular. Hence, $\mathcal{X}$ is regular and \Cref{thm:stacky_bondal_vandenbergh} implies the desired claim.
\end{proof}

\section{Characterization via \texorpdfstring{$t$}{t}-structures}
\label{sec:regularity_t_structures}

In this last section, we show a version of \cite[Theorem 0.1]{Neeman:2022} for algebraic stacks. This requires a technical condition to be satisfied:

\begin{equation}
    \tag{$\textbf{Hypothesis }\star$}\label{hyp:stacky_approx}
    \begin{minipage}{10cm}
        An algebraic stack $\mathcal{X}$ satisfies \ref{hyp:stacky_approx} if it is $1$-Thomason and, for any quasi-compact open $\mathcal{U} \subseteq \mathcal{X}$ with complement $Z$, the standard $t$-structure on $D_{\operatorname{qc},Z}(\mathcal{X})$ is in the preferred equivalence class.
    \end{minipage}
\end{equation}

We need to make sure that the class of algebraic stacks satisfying \ref{hyp:stacky_approx} contains `new' cases outside the settinge of quasi-compact quasi-separated schemes (see \cite[Theorem 3.2(i)]{Neeman:2022}). Showing this requires some preliminary lemmas.

\begin{lemma}
    \label{lem:open_immersion_preferred_class}
    Let $\mathcal{X}$ be a quasi-compact quasi-separated algebraic stack. 
    If $j\colon \mathcal{U} \hookrightarrow \mathcal{X}$ is a quasi-compact open immersion and $\mathcal{X}$ satisfies \ref{hyp:stacky_approx}, then so does $\mathcal{U}$.
\end{lemma}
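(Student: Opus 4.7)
The plan is to verify both clauses of \Cref{hyp:stacky_approx} for $\mathcal{U}$ by transporting witnesses from $\mathcal{X}$ along $j$. Two observations underlie everything: first, an open immersion between quasi-compact quasi-separated algebraic stacks is representable and quasi-compact, hence concentrated, so $\mathbb{R}(j_{\operatorname{qc}})_\ast$ preserves small coproducts and $\mathbb{L}j^\ast_{\operatorname{qc}}$ preserves compact objects; second, $\mathbb{L}j^\ast_{\operatorname{qc}}$ is $t$-exact for the standard $t$-structures and satisfies $\mathbb{L}j^\ast_{\operatorname{qc}}\mathbb{R}(j_{\operatorname{qc}})_\ast = \operatorname{id}$.

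For the Thomason clause on $\mathcal{U}$, a compact generator $H$ of $D_{\operatorname{qc}}(\mathcal{X})$ pulls back to a compact generator $\mathbb{L}j^\ast_{\operatorname{qc}}H$ of $D_{\operatorname{qc}}(\mathcal{U})$: any $E$ with $\operatorname{Hom}(\mathbb{L}j^\ast_{\operatorname{qc}}H[i],E)=0$ for all $i$ satisfies $\operatorname{Hom}(H[i],\mathbb{R}(j_{\operatorname{qc}})_\ast E)=0$, hence $\mathbb{R}(j_{\operatorname{qc}})_\ast E=0$ and $E=\mathbb{L}j^\ast_{\operatorname{qc}}\mathbb{R}(j_{\operatorname{qc}})_\ast E = 0$. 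For the second Thomason clause, any quasi-compact open $\mathcal{V}\hookrightarrow\mathcal{U}$ gives a quasi-compact open immersion $\mathcal{V}\hookrightarrow\mathcal{X}$ by composition, so Thomason on $\mathcal{X}$ produces a perfect complex supported on $|\mathcal{X}|\setminus|\mathcal{V}|$; its pullback along $j$ is perfect with support $|\mathcal{U}|\setminus|\mathcal{V}|$.

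For the preferred class clause, fix a quasi-compact open $\mathcal{V}\subseteq\mathcal{U}$ and set $Z := |\mathcal{U}|\setminus|\mathcal{V}|$, $Z_0 := |\mathcal{X}|\setminus|\mathcal{V}|$, so that $Z = Z_0\cap|\mathcal{U}|$ and, crucially, $|\mathcal{X}|\setminus|\mathcal{U}|\subseteq Z_0$. Applying \Cref{hyp:stacky_approx} on $\mathcal{X}$ to $\mathcal{V}\hookrightarrow\mathcal{X}$ produces a compact generator $G$ of $D_{\operatorname{qc},Z_0}(\mathcal{X})$ whose compactly generated $t$-structure $\tau_G$ is equivalent to the standard one, with some equivalence constant $N\geq 0$. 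I claim $\mathbb{L}j^\ast_{\operatorname{qc}}G$ is a compact generator of $D_{\operatorname{qc},Z}(\mathcal{U})$: it is compact with support contained in $Z$, and for any $E$ in its right-orthogonal the support observation above gives $\operatorname{supph}(\mathbb{R}(j_{\operatorname{qc}})_\ast E)\subseteq Z\cup(|\mathcal{X}|\setminus|\mathcal{U}|)\subseteq Z_0$, so $\mathbb{R}(j_{\operatorname{qc}})_\ast E\in D_{\operatorname{qc},Z_0}(\mathcal{X})$ is right-orthogonal to $G$ and therefore vanishes, forcing $E = 0$.

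It remains to compare $\tau_{\mathbb{L}j^\ast_{\operatorname{qc}}G}$ with the standard $t$-structure on $D_{\operatorname{qc},Z}(\mathcal{U})$. The inclusion $\tau_{\mathbb{L}j^\ast_{\operatorname{qc}}G}^{\leq -N}\subseteq D^{\leq 0}_{\operatorname{qc},Z}(\mathcal{U})$ follows by applying the exact, coproduct-preserving, $t$-exact functor $\mathbb{L}j^\ast_{\operatorname{qc}}$ to the generators $G[i]\in D^{\leq 0}_{\operatorname{qc},Z_0}(\mathcal{X})$ of $\tau_G^{\leq -N}$. The reverse inclusion I expect to be the main obstacle: given $E\in D^{\leq 0}_{\operatorname{qc},Z}(\mathcal{U})$, one sets $F := \tau^{\leq 0}\mathbb{R}(j_{\operatorname{qc}})_\ast E$, which lies in $D^{\leq 0}_{\operatorname{qc},Z_0}(\mathcal{X})\subseteq\tau_G^{\leq N}$ and satisfies $\mathbb{L}j^\ast_{\operatorname{qc}}F = \tau^{\leq 0}\mathbb{L}j^\ast_{\operatorname{qc}}\mathbb{R}(j_{\operatorname{qc}})_\ast E = \tau^{\leq 0}E = E$; applying $\mathbb{L}j^\ast_{\operatorname{qc}}$ to a presentation of $F$ in $\operatorname{Coprod}(\{G[i] : i\geq -N\})$ then places $E$ in $\tau_{\mathbb{L}j^\ast_{\operatorname{qc}}G}^{\leq N}$. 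The resolution of this step rests entirely on the $t$-exactness of $\mathbb{L}j^\ast_{\operatorname{qc}}$ together with $\mathbb{L}j^\ast_{\operatorname{qc}}\mathbb{R}(j_{\operatorname{qc}})_\ast = \operatorname{id}$.
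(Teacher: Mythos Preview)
Your argument is correct and follows the same overall strategy as the paper: verify Thomason on $\mathcal{U}$ by pullback, then for each closed $Z\subseteq|\mathcal{U}|$ with quasi-compact complement pull back a compact generator from a suitable supported category on $\mathcal{X}$ and compare the aisles. The one genuine difference lies in the ``hard'' inclusion $D^{\leq 0}_{\operatorname{qc},Z}(\mathcal{U})\subseteq\tau_{\mathbb{L}j^\ast_{\operatorname{qc}}G}^{\leq N}$. The paper uses that $\mathbb{R}(j_{\operatorname{qc}})_\ast$ has finite cohomological dimension (checked smooth-locally), so that $\mathbb{R}(j_{\operatorname{qc}})_\ast E$ already lands in $D^{\leq r}_{\operatorname{qc},\overline{Z}}(\mathcal{X})$ and one can apply $\mathbb{L}j^\ast_{\operatorname{qc}}$ directly. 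You instead truncate: $F=\tau^{\leq 0}\mathbb{R}(j_{\operatorname{qc}})_\ast E$ lies in $D^{\leq 0}_{\operatorname{qc},Z_0}(\mathcal{X})$ regardless of cohomological dimension, and $t$-exactness of $\mathbb{L}j^\ast_{\operatorname{qc}}$ together with $\mathbb{L}j^\ast_{\operatorname{qc}}\mathbb{R}(j_{\operatorname{qc}})_\ast=\operatorname{id}$ gives $\mathbb{L}j^\ast_{\operatorname{qc}}F=E$. This is a slightly slicker maneuver that sidesteps the cohomological-dimension estimate entirely. A minor further difference: you work with $Z_0=|\mathcal{X}|\setminus|\mathcal{V}|$, which manifestly has quasi-compact complement $|\mathcal{V}|$, whereas the paper uses the closure $\overline{Z}$; your choice makes the applicability of \Cref{hyp:stacky_approx} on $\mathcal{X}$ immediate.
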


\begin{proof}
    An open substack of a 1-Thomason algebraic stack is again 1-Thomason (see e.g.\ \cite[Observation 5.6]{Neeman:2023b} or \cite[Lemma 4.4]{Lank:2026}). So, to show the other condition in \ref{hyp:stacky_approx}, let $\mathcal{V} \hookrightarrow \mathcal{U}$ be a quasi-compact open immersion with complement $Z$. 
    Composed with $j$, this yields a quasi-compact open immersion $\mathcal{V} \hookrightarrow \mathcal{X}$.
    Denote the closure of $Z$ in $|\mathcal{X}|$ by $\overline{Z}$. 
   
    Note that $\mathbf{R} j_{\ast} D^{\leq 0}_{\operatorname{qc},Z}(\mathcal{U})\subseteq D^{\leq r}_{\operatorname{qc},\overline{Z}}(\mathcal{X})$ for some integer $r\geq 0$.
    Indeed, the statement is smooth local, which allows one to reduce to the case of schemes where the result is known (see e.g.\ \cite[\href{https://stacks.math.columbia.edu/tag/08D5}{Tag's 08D5}, \href{https://stacks.math.columbia.edu/tag/01XJ}{01XJ}, \href{https://stacks.math.columbia.edu/tag/071L}{071L}]{stacks-project}). To show the desired claim, let $G$ be a compact generator for $D_{\operatorname{qc},\overline{Z}}(\mathcal{X})$ (which exists because $\mathcal{X}$ satisfies the $1$-Thomason condition). 
    Then $\mathbf{L}j^\ast G$ is a compact generator for $D_{\operatorname{qc},Z}(\mathcal{U})$. Indeed, $\mathbf{R}j_\ast$ is conservative because it is quasi-affine \cite[Corollary 2.8]{Hall/Rydh:2017}, and so $\mathbf{L}j^\ast G$ is a compact generator (see e.g.\ \cite[Corollary 3.7]{Lank:2026}). As $\mathcal{X}$ satisfies \ref{hyp:stacky_approx}, there exists an $A>0$ such that 
    \begin{displaymath}
        \operatorname{Coprod}(\{ G[s]\mid s\geq 0 \})[A] \subseteq D^{\leq 0}_{\operatorname{qc},\overline{Z}}(\mathcal{X}) \subseteq \operatorname{Coprod}(\{ G[s]\mid s\geq 0 \})[-A].
    \end{displaymath}
    Since $j$ is flat, we know that 
    \begin{displaymath}
        \operatorname{Coprod}(\{ \mathbf{L} j^\ast G[s] \mid s\geq 0 \})[A] \subseteq D^{\leq 0}_{\operatorname{qc},Z}(\mathcal{U})
    \end{displaymath}
    Choose $E\in D^{\leq 0}_{\operatorname{qc},Z}(\mathcal{U})$. Then 
    \begin{displaymath}
        \mathbf{R} j_{\ast} E \in D^{\leq r}_{\operatorname{qc},\overline{Z}}(\mathcal{X})\subseteq \operatorname{Coprod}(\{ G[s]\mid s\geq 0 \})[-A-r]
    \end{displaymath}
    and so, 
    \begin{displaymath}
        E \cong \mathbf{L} j^\ast \mathbf{R} j_{\ast} E \in \operatorname{Coprod}(\{ \mathbf{L} j^\ast G[s] \mid  s\geq 0 \})[-A-r].
    \end{displaymath} 
    Consequently, we have
    \begin{displaymath}
        \operatorname{Coprod}(\{ \mathbf{L} j^\ast G[s] \mid s\geq 0 \})[A] \subseteq D^{\leq 0}_{\operatorname{qc},Z}(\mathcal{U}) \subseteq \operatorname{Coprod}(\{ \mathbf{L} j^\ast G[s] \mid  s\geq 0 \})[-A-r],
    \end{displaymath}
    which completes the proof.
\end{proof}

Recall that a quasi-compact morphism $f\colon\mathcal{X}\to\mathcal{Y}$ of algebraic stacks is \textbf{cohomologically affine} if the pushforward of $f$ is exact on quasi-coherent sheaves \cite[Definition\ 3.1]{Alper:2013}.

\begin{lemma}
    \label{lem:finite_faithfully_flat_affine_cover_preferred_class}
    Let $\mathcal{X}$ be a concentrated algebraic stack. Suppose there exists a finite flat surjective morphism $f\colon V \to \mathcal{X}$ of finite presentation from an affine scheme $V$. Then the natural morphism $E \to \mathbf{R}f_{\ast} \mathbf{L}f^\ast E$ is a split monomorphism for all $E\in D_{\operatorname{qc}}(\mathcal{X})$. Moreover, $\mathcal{X}$ satisfies \ref{hyp:stacky_approx}.
\end{lemma}

\begin{proof}
    By \cite[Theorem C]{Hall/Rydh:2017}, we know that $\mathcal{X}$ is $1$-Thomason. Moreover, \cite[Theorem C]{Hall/Rydh:2015} ensurs that $\mathcal{X}$ has linearly reductive stabilizers at closed points. Note that $\mathcal{X}$ has affine diagonal because it admits a finite flat surjective morphism from an affine scheme. Furthermore, $\mathcal{X}$ admits a coarse moduli space $\pi\colon\mathcal{X}\to X$ where $X$ is an affine scheme. See e.g.\ \cite[Section\ 6.2]{Olsson:2016} for a proof of this fact. From \cite[Corollary\ 5.8]{Rydh:2023}, we know that the morphism $\pi\colon\mathcal{X}\to X$ is a good moduli space. So, in particular, $\pi$ is cohomologically affine. Hence, $\mathcal{X}$ itself is cohomologically affine.

    Let us list some more facts we shall use without mention:
    \begin{itemize}
        \item First, we claim that the natural morphism $\mathcal{O}_\mathcal{X} \to \mathbf{R}f_{\ast} \mathcal{O}_V$ splits.
        In particular, using the projection formula, it follows that the natural morphism $E \to \mathbf{R}f_{\ast} \mathbf{L}f^\ast E$ is a split monomorphism for all $E\in D_{\operatorname{qc}}(\mathcal{X})$ (e.g.\ tensor with $E$ and use \cite[Lemma 5.15]{DeDeyn/Lank/ManaliRahul:2024b}). Note that $\mathcal{O}_\mathcal{X} \to \mathbf{R}f_{\ast} \mathcal{O}_V$ is injective as $f$ is schematically dominant (e.g.\ see above \cite[Definition 3.4]{Rydh:2016a}). 
        Since $f$ is finite flat and of finite presentation, we have the following short exact sequence of coherent sheaves on $\mathcal{X}$,
        \begin{displaymath}
            0\rightarrow\mathcal{O}_\mathcal{X}\xrightarrow{ntrl.} \mathbf{R}f_{\ast} \mathcal{O}_V\rightarrow\mathcal{V}\rightarrow 0.
        \end{displaymath}
        It suffices to show $\mathcal{V}$ is a locally free sheaf. Indeed, as $\mathcal{X}$ is cohomologically affine, we have $\operatorname{Ext}^{1}(\mathcal{V},\mathcal{O}_{\mathcal{X}})=H^{1}(\mathcal{X},\mathcal{V}^\vee)=0$, which gives the desired claim. So, we are left to check that $\mathcal{V}$ is a locally free sheaf. Choose a smooth surjective morphism $s\colon U \to \mathcal{X}$ from a scheme. By descent, $\mathcal{V}$ being a locally free sheaf means that $s^\ast \mathcal{V}$ is locally free on $U$ (see e.g.\ \cite[Definition 2.1]{Rydh:2016}). Since $f$ is representable by schemes, we have the induced morphism $f^\prime\colon \mathcal{Y} \times_\mathcal{X} U \to U$ is a finite flat surjective morphism of (Noetherian) schemes. We then have a short exact sequence
        \begin{displaymath}
            0\rightarrow\mathcal{O}_U\xrightarrow{ntrl.} \mathbf{R}f^\prime_{\ast} \mathcal{O}_{\mathcal{Y} \times_\mathcal{X} U} \rightarrow s^\ast\mathcal{V}\rightarrow 0.
        \end{displaymath}
        From $f^\prime$ being affine, we can reduce to the case where $f^\prime$ is a morphism of affine schemes. If we can show $\mathcal{O}_U\xrightarrow{ntrl.} \mathbf{R}f^\prime_{\ast} \mathcal{O}_{\mathcal{Y} \times_\mathcal{X} U}$ splits, then $s^\ast\mathcal{V}$ is a direct summand of a locally free sheaf, and so, the claim follows. Now we know that $\mathbf{R}f^\prime_{\ast} \mathcal{O}_{\mathcal{Y} \times_\mathcal{X} U }$ is a locally free sheaf. Hence, for each $p\in U$, we know that $(\mathbf{R}f^\prime_{\ast} \mathcal{O}_{\mathcal{Y} \times_\mathcal{X} U})_p$ contains $\mathcal{O}_{U,p}$ as a direct summand. In particular, we have that $\mathcal{O}_{U,p}\in \langle (\mathbf{R}f^\prime_{\ast} \mathcal{O}_{\mathcal{Y} \times_\mathcal{X} U})_p \rangle_1$ for all $p\in U$. This tells us, by \cite[Corollary 3.4]{Letz:2021}, that $\mathcal{O}_U \in \langle \mathbf{R}f^\prime_{\ast} \mathcal{O}_{\mathcal{Y} \times_\mathcal{X} U} \rangle_1$. Then $\mathcal{O}_U$ is a direct summand of an object of the form $ \mathbf{R}f^\prime_{\ast} E$ for some $E\in D^b_{\operatorname{coh}}(\mathcal{Y} \times_\mathcal{X} U)$. However, adjunction implies that $\mathcal{O}_U\xrightarrow{ntrl.} \mathbf{R}f^\prime_{\ast} \mathcal{O}_{\mathcal{Y} \times_\mathcal{X} U}$ splits as desired (see e.g.\ \cite[Lemma 5.15]{DeDeyn/Lank/ManaliRahul:2024b}).
        \item For any quasi-compact open immersion $j\colon \mathcal{U} \to \mathcal{X}$ with complement $Z$, there exists a $G\in D_{\operatorname{qc}}(\mathcal{X})^c$ which is a compact generator for $D_{\operatorname{qc},Z}(\mathcal{X})$. This follows from \cite[Lemma 4.10]{Hall/Rydh:2017} and our hypothesis that $\mathcal{X}$ satisfies the $1$-Thomason condition. Moreover, $\mathbf{L}f^\ast G$ is a compact generator for $D_{\operatorname{qc},f^{-1}(Z)} (V)$.
        Again, $\mathbf{R}f_\ast$ is conservative \cite[Corollary 2.8]{Hall/Rydh:2017}, and so $\mathbf{L}f^\ast$ preserves compact generators (see e.g.\ \cite[Corollary 3.7]{Lank:2026});
        it can be checked $\mathbf{R}f_\ast D_{\operatorname{qc},f^{-1}(Z)}(V)\subseteq D_{\operatorname{qc},Z} (\mathcal{X})$ by passing smooth locally and reducing to schemes where the fact is known (see e.g.\ \cite[Remark 23.46(2)]{Gortz/Wedhorn:2023}).
    \end{itemize}

    Now, let us prove the desired claim. Observe that $\mathbf{R}f_\ast \mathbf{L}f^\ast G$ is perfect whose support is contained in $Z$. Moreover, $\mathbf{R}f_\ast \mathbf{L}f^\ast G$ contains $G$ as a direct summand. Hence, $\mathbf{R}f_\ast \mathbf{L}f^\ast G$ is a compact object generating $D_{\operatorname{qc},Z}(\mathcal{X})$.
    Furthermore, if needed, we can shift $G$ to ensure that $\mathbf{R}f_{\ast} \mathbf{L}f^\ast G\in D^{\leq 0}_{\operatorname{qc},Z}(\mathcal{X})$. Then we know that,
    \begin{displaymath}
        \operatorname{Coprod}(\{\mathbf{R}f_{\ast} \mathbf{L}f^\ast G [i]\mid  i\geq 0\})\subseteq D^{\leq 0}_{\operatorname{qc},Z}(\mathcal{X}).
    \end{displaymath}
    To show the other required containment, note that, as affine schemes satisfy \ref{hyp:stacky_approx}, we can choose an $A>0$ with 
    \begin{displaymath}
        D^{\leq 0}_{\operatorname{qc},f^{-1}(Z)}(V)\subseteq \operatorname{Coprod}(\{\mathbf{L}f^\ast G [i]\mid s\geq 0 \})[A].
    \end{displaymath}
    Now, observe that, since $f$ is flat, $\mathbf{L}f^\ast D^{\leq 0}_{\operatorname{qc},Z}(\mathcal{X})\subseteq D^{\leq 0}_{\operatorname{qc},f^{-1}(Z)}(V) $ and using the above splitting that every object of $D^{\leq 0}_{\operatorname{qc},Z}(\mathcal{X})$ is a direct summand of an object belonging to $\mathbf{R}f_{\ast} \mathbf{L}f^\ast D^{\leq 0}_{\operatorname{qc},Z}(\mathcal{X})$.
    Consequently, as aisles are closed under summands,
    \begin{displaymath}
        D^{\leq 0}_{\operatorname{qc},Z}(\mathcal{X})\subseteq \operatorname{Coprod}(\{\mathbf{R}f_{\ast} \mathbf{L}f^\ast G [i]\mid i\geq 0 \})[A]
    \end{displaymath}
    which completes the proof.
\end{proof}

\begin{remark}
    The splitting in the statement of \Cref{lem:finite_faithfully_flat_affine_cover_preferred_class} is equivalent to \cite[Lemma\ A.1]{Hall:2016}; in fact, loc.\ cit.\ has a slight error in the proof, which the above fixes at least for the concentrated case.
\end{remark}

\begin{lemma}
    \label{lem:BNP_weak_version_for_glueing}
    Consider a recollement of triangulated categories 
    \begin{displaymath}
        \begin{tikzcd}[ampersand replacement=\&]
            {\mathcal{D}_Z} \&\& {\mathcal{D}} \&\& {\mathcal{D}_U}\rlap{ .}
            \arrow["{i_\ast}" description, from=1-1, to=1-3]
            \arrow["{i^!}", bend right = -30 pt, from=1-3, to=1-1]
            \arrow["{i^\ast}"', bend right = 30 pt, from=1-3, to=1-1]
            \arrow["{j^\ast}" description, from=1-3, to=1-5]
            \arrow["{j_!}"', bend right = 30 pt, from=1-5, to=1-3]
            \arrow["{j_\ast}", bend right = -30 pt, from=1-5, to=1-3]
        \end{tikzcd}
    \end{displaymath}
    Suppose $\mathcal{D}_U$ admits a compact generator $G_U$, and $\mathcal{D}$ admits a compact generator $G^\prime$ with $\operatorname{Hom}(G^\prime [-n],G^\prime)=0$ for $n\gg 0$ and $j^\ast G^\prime\in \mathcal{D}_U^{\leq m}$ for some $m>0$, where $\mathcal{D}_U^{\leq 0}$ is the aisle of the $t$-structure on $\mathcal{D}_U$ compactly generated by $G_U$. 
    Then there exists a compact generator $G\in \mathcal{D}$, which can be taken to be $G^\prime[m] \oplus j_! G_U$,  such that the $t$-structure on $\mathcal{D}$ compactly generated by $G$ is the gluing of the $t$-structure on $\mathcal{D}_Z$ compactly generated by $i^\ast G$ and the $t$-structure on $\mathcal{D}_U$ compactly generated by $G_U$. 
    In particular, the glued $t$-structure is in the preferred equivalence class.
\end{lemma}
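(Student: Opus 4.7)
The plan is to construct an explicit compact generator of $\mathcal{D}$ whose compactly generated aisle matches the glued aisle. My candidate is $G := G'[m] \oplus j_! G_U$. Compactness is immediate (both summands are compact, as $j_!$ preserves compacts since $j^*$ preserves coproducts). Generation of $\mathcal{D}$ follows because $\operatorname{Hom}_\mathcal{D}(G[s], X)=0$ for all $s\in\mathbb{Z}$ forces $\operatorname{Hom}_\mathcal{D}(G'[s'], X)=0$ for all $s'\in\mathbb{Z}$, hence $X=0$ since $G'$ generates. Note that $i^*G = i^*G'[m]$ (as $i^*j_!=0$) and $j^*G = j^*G'[m]\oplus G_U$ (as $j^*j_!\cong\mathrm{id}$).

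Let $\mathcal{A} := \operatorname{Coprod}(\{G[s]\mid s\geq 0\})$ be the aisle compactly generated by $G$, and let $\mathcal{B}$ denote the glued aisle, i.e.\ $\{E : j^*E\in\mathcal{D}_U^{\leq 0},\ i^*E\in \mathcal{D}_Z^{\leq 0}\}$, where $\mathcal{D}_Z^{\leq 0}$ is the aisle compactly generated by $i^*G$. The key claim is $\mathcal{A}=\mathcal{B}$. For $\mathcal{A}\subseteq\mathcal{B}$ it suffices (since $\mathcal{B}$ is an aisle) to check that $G\in\mathcal{B}$: the condition on $i^*G$ is tautological, while $j^*G\in\mathcal{D}_U^{\leq 0}$ follows because $G_U\in\mathcal{D}_U^{\leq 0}$ and the hypothesis $j^*G'\in\mathcal{D}_U^{\leq m}$ yields $j^*G'[m]\in\mathcal{D}_U^{\leq 0}$.

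For the reverse inclusion $\mathcal{B}\subseteq\mathcal{A}$, given $E\in\mathcal{B}$ use the recollement triangle $j_!j^*E \to E \to i_*i^*E \to j_!j^*E[1]$; by extension closure it suffices that both outer terms lie in $\mathcal{A}$. Since $j_!$ is triangulated and preserves coproducts, $j^*E\in\operatorname{Coprod}(\{G_U[s]\mid s\geq 0\})$ yields $j_!j^*E\in\operatorname{Coprod}(\{j_!G_U[s]\mid s\geq 0\})\subseteq\mathcal{A}$, using that $j_!G_U$ is a summand of $G$ and that $\mathcal{A}$ is closed under summands. Similarly, since $i_*$ preserves coproducts (via right adjoint $i^!$) and triangles, it reduces to showing $i_*i^*G\in\mathcal{A}$. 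Applying the recollement triangle to $G'[m]$ gives $j_!j^*G'[m]\to G'[m]\to i_*i^*G'[m]\to j_!j^*G'[m][1]$: both the first term (as $j^*G'[m]\in\mathcal{D}_U^{\leq 0}$ by the previous argument) and $G'[m]$ (as a summand of $G$) lie in $\mathcal{A}$, so $i_*i^*G = i_*i^*G'[m]\in\mathcal{A}$ by extension closure. The ``in particular'' assertion is then immediate, since the $t$-structure compactly generated by any compact generator of $\mathcal{D}$ defines the preferred class.

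The main obstacle I expect is the final step $i_*i^*G\in\mathcal{A}$, which is exactly why the shift $m$ is baked into the definition of $G$: without it, neither $G'[m]$ nor $j_!j^*G'[m]$ would be visibly in $\mathcal{A}$. The hypothesis $\operatorname{Hom}(G'[-n],G')=0$ for $n\gg 0$ does not appear to enter this sketch; I suspect it features only at the level of the downstream applications (e.g.\ verifying \Cref{hyp:stacky_approx} in \Cref{prop:preferred_equivalence_classes}), not in the gluing argument itself.
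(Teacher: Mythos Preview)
Your proof is correct and your construction $G=G'[m]\oplus j_!G_U$ is exactly the one underlying \cite[Lemma~3.11]{Burke/Neeman/Pauwels:2023}, to which the paper's proof simply defers (with the sole remark that weak approximability of $\mathcal{D}_U$ is only used there to obtain $j^\ast G'\in\mathcal{D}_U^{\leq m}$, which is now a hypothesis). So this is essentially the paper's argument written out.

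One small wording quibble: when you conclude $i_\ast i^\ast G'[m]\in\mathcal{A}$ from the triangle $j_!j^\ast G'[m]\to G'[m]\to i_\ast i^\ast G'[m]$, you invoke ``extension closure'', but strictly speaking you are using that aisles are closed under \emph{cones}, which follows from extension closure together with closure under $[1]$ (rotate to $G'[m]\to i_\ast i^\ast G'[m]\to j_!j^\ast G'[m][1]$). This is harmless but worth saying precisely.

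Your observation about the hypothesis $\operatorname{Hom}(G'[-n],G')=0$ for $n\gg 0$ is well taken: it is not used in the gluing argument itself. It is inherited from the statement of \cite[Lemma~3.11]{Burke/Neeman/Pauwels:2023}, where it sits among the standing hypotheses of that paper (ensuring the preferred $t$-structure on $\mathcal{D}$ is nondegenerate, part of the approximability framework), but the identification $\mathcal{A}=\mathcal{B}$ that you prove here does not require it.
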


\begin{proof}
    This is argued verbatim to \cite[Lemma 3.11]{Burke/Neeman/Pauwels:2023} where in loc.\ cit.\ one can replace the condition $\mathcal{D}_U$ `weakly approximable' by our hypothesis $j^\ast G^\prime\in\mathcal{D}_U^{\leq m}$ for some $m>0$.
\end{proof}

   Consider a cartesian diagram of quasi-compact quasi-separated algebraic stacks
    \begin{equation}
        \label{eq:MV_square}
        \begin{tikzcd}
            f^{-1}(\mathcal{U})\arrow[r,"i", hook]\arrow[d,"g"] & \mathcal{Y}\arrow[d,"f"]\\
            \mathcal{U}\arrow[r,"j", hook] & \mathcal{X}\rlap{ ,}
        \end{tikzcd}
    \end{equation}
    where $j$ is a quasi-compact open immersion.
    Recall, from \cite[Definition 1.2]{Hall/Rydh:2023}, that 
    \eqref{eq:MV_square} is called a \textbf{flat Mayer--Vietoris square} when 
    \begin{itemize}
        \item $f$ is flat and for every morphism of algebraic stacks $\mathcal{W} \to \mathcal{X}$ with image disjoint from $\mathcal{U}$, the induced morphism $\mathcal{X}^\prime \times_{\mathcal{X}} \mathcal{W} \to \mathcal{W}$ is an isomorphism.
    \end{itemize}
    When $f$ is \'{e}tale, this is also known as an `\'{e}tale neighborhood' (see \cite[Lemma 2.1]{Rydh:2011}).

\begin{lemma}
    \label{lem:etale_nbhd_preferred_class}
    Consider a flat Mayer--Vietoris square as in \eqref{eq:MV_square} where $f$ is concentrated. If $\mathcal{U}$ and $\mathcal{Y}$ satisfy \ref{hyp:stacky_approx}, then so does $\mathcal{X}$.
\end{lemma}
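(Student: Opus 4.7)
The plan is to reduce Hypothesis \ref{hyp:stacky_approx} for $\mathcal{X}$ to the same on $\mathcal{U}$ and on $\mathcal{Y}$ via the open--closed decomposition $\mathcal{X} = \mathcal{U} \cup \mathcal{Z}$, where $\mathcal{Z} = |\mathcal{X}| \setminus |\mathcal{U}|$ carries its reduced induced structure. The key input is the $t$-exact Mayer--Vietoris identification $D_{\operatorname{qc},|\mathcal{Z}|}(\mathcal{X}) \cong D_{\operatorname{qc},f^{-1}(|\mathcal{Z}|)}(\mathcal{Y})$, which more generally extends to any further closed support condition.

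First I would verify that $\mathcal{X}$ is Thomason. A compact generator of $D_{\operatorname{qc}}(\mathcal{X})$ is produced by gluing, across the recollement
\begin{displaymath}
    D_{\operatorname{qc},|\mathcal{Z}|}(\mathcal{X}) \longrightarrow D_{\operatorname{qc}}(\mathcal{X}) \longrightarrow D_{\operatorname{qc}}(\mathcal{U}),
\end{displaymath}
a compact generator of $D_{\operatorname{qc}}(\mathcal{U})$ (Thomason of $\mathcal{U}$) with one of $D_{\operatorname{qc},|\mathcal{Z}|}(\mathcal{X})$ inherited from $D_{\operatorname{qc},f^{-1}(|\mathcal{Z}|)}(\mathcal{Y})$ (Thomason of $\mathcal{Y}$). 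Given a quasi-compact open $\mathcal{V} \subseteq \mathcal{X}$ with complement $W$, a perfect complex on $\mathcal{X}$ with support $W$ is assembled analogously from a perfect complex on $\mathcal{U}$ with support $W \cap \mathcal{U}$ and a perfect complex supported on $|\mathcal{Z}|$ with additional support in $W \cap |\mathcal{Z}|$, the latter being transported across Mayer--Vietoris from $\mathcal{Y}$.

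For the preferred equivalence class condition, fix $\mathcal{V}, W$ as above and consider the restricted recollement
\begin{displaymath}
    D_{\operatorname{qc},W \cap |\mathcal{Z}|}(\mathcal{X}) \longrightarrow D_{\operatorname{qc},W}(\mathcal{X}) \longrightarrow D_{\operatorname{qc},W \cap \mathcal{U}}(\mathcal{U}).
\end{displaymath}
Hypothesis \ref{hyp:stacky_approx} for $\mathcal{U}$ puts the standard $t$-structure on $D_{\operatorname{qc},W \cap \mathcal{U}}(\mathcal{U})$ in the preferred equivalence class. On the other side, $f^{-1}(W \cap |\mathcal{Z}|)$ has complement $f^{-1}(\mathcal{V} \cup \mathcal{U})$ in $\mathcal{Y}$, which is a quasi-compact open; hence Hypothesis \ref{hyp:stacky_approx} for $\mathcal{Y}$, combined with the $t$-exact Mayer--Vietoris equivalence, places the standard $t$-structure on $D_{\operatorname{qc},W \cap |\mathcal{Z}|}(\mathcal{X})$ in the preferred class.

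Finally I would apply \Cref{lem:BNP_weak_version_for_glueing} to this recollement. The candidate compact generator $G'$ of $D_{\operatorname{qc},W}(\mathcal{X})$ is constructed by combining a compact generator of $D_{\operatorname{qc},W \cap |\mathcal{Z}|}(\mathcal{X})$ (from $\mathcal{Y}$) with a lift to $D_{\operatorname{qc},W}(\mathcal{X})$ of a compact generator of $D_{\operatorname{qc},W \cap \mathcal{U}}(\mathcal{U})$. Placing both pieces in the preferred class gives, after suitable shifts, the boundedness of $j^\ast G'$ and the Hom-vanishing $\operatorname{Hom}(G'[-n],G') = 0$ for $n \gg 0$ required by the lemma. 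Its conclusion, together with the fact that the standard $t$-structure on $D_{\operatorname{qc},W}(\mathcal{X})$ is itself the gluing of the two standard $t$-structures (standardness being smooth-local), identifies the standard $t$-structure with a compactly generated one, proving it lies in the preferred equivalence class. The main obstacle is the verification of the hypotheses of the BNP lemma in the non-concentrated setting, where compact objects need not be perfect or cohomologically bounded a priori; this is exactly what the preferred-class property on each stratum guarantees, making the glued $G'$ well-behaved enough.
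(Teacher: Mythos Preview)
Your proposal is correct and follows the same route as the paper: the supported recollement, the Mayer--Vietoris identification $D_{\operatorname{qc},W\cap|\mathcal{Z}|}(\mathcal{X})\cong D_{\operatorname{qc},f^{-1}(W\cap|\mathcal{Z}|)}(\mathcal{Y})$ to import the hypothesis from $\mathcal{Y}$, and \Cref{lem:BNP_weak_version_for_glueing}, with your final step (the standard $t$-structure is the glued-of-standards, and gluing preserves equivalence) being a clean repackaging of the paper's explicit inclusion bounds, which boil down to right $t$-exactness of $i^\ast$ and $\mathbb{L}j_{\operatorname{qc}}^\ast$ together with perfection of $G$. One small correction to your closing remark: compact always implies perfect for quasi-compact quasi-separated stacks (it is the converse that can fail), so the only genuine subtlety in the BNP hypotheses is the Hom-vanishing, which neither you nor the paper spells out but which is automatic in the concentrated setting where the lemma is applied.
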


\begin{proof}
    That $\mathcal{X}$ satisfies the $1$-Thomason condition follows by \cite[Example 6.2 \& Proposition 6.8]{Hall/Rydh:2017}. 
    So, we only need to show that $\mathcal{X}$ satisfies the rest of \ref{hyp:stacky_approx}. Set $Z:=|\mathcal{X}|\setminus |\mathcal{U}|$. Let $\mathcal{V}\to \mathcal{X}$ be a quasi-compact open immersion. Denote by $W:=|\mathcal{X}|\setminus |\mathcal{V}|$. By \cite[Corollary 3.3]{Lank:2026}, there exists a recollement
    \begin{displaymath}
        \begin{tikzcd}
            {D_{\operatorname{qc},|\mathcal{U}|\cap W}(\mathcal{U})} && {D_{\operatorname{qc},W}(\mathcal{X})} && {D_{\operatorname{qc},Z\cap W}(\mathcal{X})}
            \arrow["{\mathbf{R}j_\ast}"{description}, from=1-1, to=1-3]
            \arrow["{\mathbf{L}j^\ast}"', bend right =20pt, from=1-3, to=1-1]
            \arrow["{j^!}", bend right =-20pt, from=1-3, to=1-1]
            \arrow["{i^!}"{description}, from=1-3, to=1-5]
            \arrow["{i_\ast}"', bend right =20pt, from=1-5, to=1-3]
            \arrow["{i^\ast}", bend right =-20pt, from=1-5, to=1-3]
        \end{tikzcd}
    \end{displaymath}
    where $i_\ast$ is the natural inclusion (loc.\ cit.\ uses that $\mathcal{U}$ satisfies the Thomason condition). 

    To start, we make the observation that there is an $a\geq 0$ such that $i^! D^{\leq 0}_{\operatorname{qc},W}(\mathcal{X})\subseteq D^{\leq a+1}_{\operatorname{qc},Z\cap W}(\mathcal{X})$. Let $E\in D^{\leq 0}_{\operatorname{qc},W}(\mathcal{X})$. Since $j$ is a flat quasi-affine morphism, we know that $\mathbf{L}j^\ast E\in D_{\operatorname{qc},|\mathcal{U}|\cap W}^{\leq 0}(\mathcal{U})$ and $\mathbf{R}j_\ast D^{\leq 0}_{\operatorname{qc},|\mathcal{U}|\cap W}(\mathcal{U})\subseteq D^{\leq a}_{\operatorname{qc},W}(\mathcal{X})$ for some $a\geq 0$. 
    Indeed, flatness ensures $\mathbf{L}j^\ast$ is $t$-exact with respect to the standard $t$-structures, whereas open immersions being concentrated ensures the existence of such an $a$.
    The recollement gives us a distinguished triangle 
    \begin{equation}\label{eq:rec}
        i_\ast i^! E \to E \to \mathbf{R}j_\ast \mathbf{L}j^\ast E \to i_\ast i^! E [1].
    \end{equation}
    Note that $\mathbf{R}j_\ast \mathbf{L}j^\ast E  \in D^{\leq a}_{\operatorname{qc},W}(\mathcal{X})$ and
    \begin{displaymath}
        E\in D^{\leq 0}_{\operatorname{qc},W}(\mathcal{X}) \subseteq D^{\leq 0}_{\operatorname{qc},W}(\mathcal{X})[-a] = D^{\leq a}_{\operatorname{qc},W}(\mathcal{X}).
    \end{displaymath}
    So, we see from \eqref{eq:rec} that $i_\ast i^! E [1] \in D^{\leq a}_{\operatorname{qc},W}(\mathcal{X})$. Hence, it follows that $i^! E \in D^{\leq a+1}_{\operatorname{qc},Z\cap W}(\mathcal{X})$ because $i_\ast$ is really just the inclusion (although, another way to see this is to use $t$-exactness and conservativeness) with respect to the standard $t$-structure, which shows the desired observation.

    Now, we return to proving the desired claim. As $\mathcal{X}$ is $1$-Thomason, we know that $D_{\operatorname{qc},W}(\mathcal{X})$ is singly compactly generated, and so we can find $P\in D_{\operatorname{qc},W}(\mathcal{X})^c$ such that $\mathbf{L}j^\ast P$ compactly generates $D_{\operatorname{qc},|\mathcal{U}|\cap W}(\mathcal{U})$. Indeed, $\mathbf{R}j_\ast$ is conservative, and so $\mathbf{L}j^\ast P$ is a compact generator whenever $P$ is such for $D_{\operatorname{qc},W}(\mathcal{X})$ (see e.g.\ \cite[Corollary 3.7]{Lank:2026}). Moreover, by 
    \cite[Proposition 4.2]{Hall/Rydh:2023}, $\mathbf{R}f_\ast$ restricts to an equivalence $D_{\operatorname{qc},f^{-1}(Z\cap W)}(\mathcal{Y})\to D_{\operatorname{qc},Z\cap W}(\mathcal{X})$ (which requires $f$ to be concentrated). In particular, this equivalence is $t$-exact with respect to the standard $t$-structures. As $\mathcal{Y}$ is $1$-Thomason, it follows that $D_{\operatorname{qc},f^{-1}(Z\cap W)}(\mathcal{Y})$, and hence $D_{\operatorname{qc},Z\cap W}(\mathcal{X})$, is compactly generated. Let $P_Z$ be a compact generator for $ D_{\operatorname{qc},Z\cap W}(\mathcal{X})$. 
    As $i^!$ preserves coproducts (as it admits a right adjoint) it follows formally that $i_\ast$ preserves compacts.
    So $i_\ast P_Z \in D_{\operatorname{qc},W}(\mathcal{X})^c$.

    We are in the setting where $\operatorname{Hom}(P  [-n], P ) = 0$ for $n\gg 0$.
    Moreover, after shifting if needed, we can impose $P \oplus i_\ast P_Z\in D^{\leq 0}_{\operatorname{qc}}(\mathcal{X})$.
    Hence, by the observation established above, we have $i^! P \in D^{\leq a+1}_{\operatorname{qc},Z\cap W}(\mathcal{X})$.
    Consequently, by \Cref{lem:BNP_weak_version_for_glueing}, the glued $t$-structure on $D_{\operatorname{qc},W}(\mathcal{X})$, of the aisles on respectively $D_{\operatorname{qc},|\mathcal{U}|\cap W}(\mathcal{U})$ and $D_{\operatorname{qc},Z\cap W}(\mathcal{X})$ compactly generated by $\mathbf{L}j^\ast P$ and $P_Z$, is compactly generated by $P [a+1]\oplus i_\ast P_Z$.

    We claim that there is an $N\geq 0$ such that 
    \begin{displaymath}
        D_{\operatorname{qc},W}^{\leq -N}(\mathcal{X})\subseteq\operatorname{Coprod}(\{ (P\oplus i_\ast P_Z) [s]\ |\ s\geq 0\})\subseteq D_{\operatorname{qc},Z}^{\leq N}(\mathcal{X})
    \end{displaymath}
    which shows our desired claim as $(P\oplus i_\ast P_Z)$ is also a compact generator.

    First, as $P \oplus i_\ast P_Z\in D^{\leq 0}_{\operatorname{qc}}(\mathcal{X})$, it follows that
    \begin{displaymath}
        \operatorname{Coprod}(\{ (P \oplus i_\ast P_Z) [s]\ |\ s\geq 0\})\subseteq D_{\operatorname{qc},Z}^{\leq t}(\mathcal{X}) = D_{\operatorname{qc},Z}^{\leq 0}(\mathcal{X})[-t]
    \end{displaymath}
    for all $t\geq 0$. 
    Thus, it remains to show there is an $N\geq 0$ such that $D_{\operatorname{qc},W}^{\leq -N}(\mathcal{X})\subseteq\operatorname{Coprod}(\{ (P \oplus i_\ast P_Z) [s]\ |\ s\geq 0\})$. 
    This can be done by showing there is an $N\geq 0$ such that
    \begin{displaymath}
        \begin{aligned}
            \mathbf{L}j^\ast D^{\leq -N}_{\operatorname{qc},W}(\mathcal{X})&\in \operatorname{Coprod} (\{ \mathbf{L}j^\ast P[s]\  |\ s\geq 0\})\\ i^! D^{\leq -N}_{\operatorname{qc},W}(\mathcal{X})&\in \operatorname{Coprod}(\{ P_Z [s]\ |\ s\geq 0\}).
        \end{aligned}
    \end{displaymath}
    Indeed, by of the glued $t$-structure, this implies that
    \begin{displaymath}
        \begin{aligned}
            D^{\leq -N}_{\operatorname{qc},W}(\mathcal{X}) 
            &\subseteq \operatorname{Coprod} (\{ (P[a+1]\oplus i_\ast P_Z) [s]\ |\ s\geq 0\}) 
            \\&\subseteq \operatorname{Coprod} (\{ (P\oplus i_\ast P_Z) [s]\ |\ s\geq 0\}).
        \end{aligned}
    \end{displaymath}
    Towards that end, using the hypothesis on $\mathcal{U}$ and $\mathcal{Y}$, for all $m\gg 0$ we have
    \begin{displaymath}
        \begin{aligned}
            D_{\operatorname{qc},|\mathcal{U}|\cap W}^{\leq -m}  (\mathcal{U}) &\subseteq \operatorname{Coprod}(\{\mathbf{L}j^\ast P [s]\ |\ s\geq 0\})       \quad\text{and}     \\
            D_{\operatorname{qc},Z\cap W}^{\leq -m}(\mathcal{X}) &\subseteq\operatorname{Coprod}(\{ P_Z [s]\ |\ s\geq 0\}).
        \end{aligned}
    \end{displaymath}
    Consequently, it follows that by taking $N:=m+a + 1 \geq 0$, that
    \begin{displaymath}
        \begin{aligned}
            \mathbf{L}j^\ast D^{\leq -N}_{\operatorname{qc},W}(\mathcal{X})&\subseteq D_{\operatorname{qc},|\mathcal{U}|\cap W}^{\leq -N} (\mathcal{U})  \\
            &\subseteq D_{\operatorname{qc},|\mathcal{U}|\cap W}^{\leq -m} (\mathcal{U}) \\&\subseteq \operatorname{Coprod}(\{\mathbf{L}j^\ast P [s]\ |\ s\geq 0\}),
        \end{aligned}
    \end{displaymath}
    and that 
    \begin{displaymath}
        \begin{aligned}
            i^! D^{\leq -N}_{\operatorname{qc},W}(\mathcal{X})
            &\subseteq D^{\leq -m}_{\operatorname{qc},Z\cap W}(\mathcal{X})
            \\&\subseteq \operatorname{Coprod}(\{ P_Z [s]\ |\ s\geq 0\})
        \end{aligned}
    \end{displaymath}
    showing the desired claim and completing the proof.
\end{proof}

\begin{proposition}
    \label{prop:preferred_equivalence_classes}
    Let $\mathcal{S}$ be a concentrated algebraic stack that either    
    \begin{enumerate}
        \item \label{item:stacks_preferred1} has quasi-finite and separated diagonal or
        \item \label{item:stacks_preferred2} is a Deligne--Mumford stack of characteristic zero.
    \end{enumerate}
    Then $\mathcal{S}$ satisfies \ref{hyp:stacky_approx}.
\end{proposition}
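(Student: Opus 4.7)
The plan is a dévissage argument built on the three preceding lemmas. First, in both cases $\mathcal{S}$ is Thomason: this is \cite[Theorem A]{Hall/Rydh:2017} in case \eqref{item:stacks_preferred1} and \cite[Theorem 7.4]{Hall/Rydh:2018} in case \eqref{item:stacks_preferred2}. So it remains to check that for every quasi-compact open immersion $j\colon \mathcal{U}\hookrightarrow \mathcal{S}$ with complement $Z$, the standard $t$-structure on $D_{\operatorname{qc},Z}(\mathcal{S})$ lies in the preferred equivalence class.

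The strategy is to apply the étale dévissage of Hall--Rydh: in each of the two cases, for any quasi-compact open $\mathcal{U}\subseteq \mathcal{S}$ one can construct a flat Mayer--Vietoris square
\begin{displaymath}
    \begin{tikzcd}
        f^{-1}(\mathcal{U})\arrow[r]\arrow[d] & \mathcal{Y}\arrow[d,"f"]\\
        \mathcal{U}\arrow[r,hook,"j"] & \mathcal{S}
    \end{tikzcd}
\end{displaymath}
in which $f$ is étale and $\mathcal{Y}$ admits a finite flat surjective morphism of finite presentation from an affine scheme. Given such a square, Lemma \ref{lem:finite_faithfully_flat_affine_cover_preferred_class} yields Hypothesis \ref{hyp:stacky_approx} for $\mathcal{Y}$, and then Lemma \ref{lem:etale_nbhd_preferred_class} reduces the desired property for $\mathcal{S}$ to the same property for the strictly smaller open $\mathcal{U}$. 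Iterating this reduction---organized as a Noetherian-style induction on $|\mathcal{S}|$, after first reducing to the Noetherian case via standard approximation arguments since both classes of stacks are stable under such limits---eventually reduces the problem to the trivial case $Z=\varnothing$.

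The main technical obstacle lies in producing the étale neighborhood $(\mathcal{Y},f)$ with the desired structure. In case \eqref{item:stacks_preferred1}, this is the local structure theorem for stacks with quasi-finite and separated diagonal (\textit{loc.\ cit.}), which étale-locally presents $\mathcal{S}$ as a quotient of an affine scheme by a finite group action; such a quotient manifestly admits the requisite finite flat surjection from an affine. In case \eqref{item:stacks_preferred2}, one uses that a Deligne--Mumford stack in characteristic zero has étale stabilizers which are linearly reductive, so by \cite[\S 7]{Hall/Rydh:2018} one can again produce étale neighborhoods with finite flat affine presentations. Once these neighborhoods are in hand, chaining Lemmas \ref{lem:finite_faithfully_flat_affine_cover_preferred_class}, \ref{lem:open_immersion_preferred_class}, and \ref{lem:etale_nbhd_preferred_class} closes the induction and finishes the proof.
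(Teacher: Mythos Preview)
Your proposal correctly identifies the three preceding lemmas as the essential ingredients, and the d\'evissage shape is right. However, the organization has genuine gaps.

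The most serious is the reduction to the Noetherian case ``via standard approximation arguments'': \Cref{hyp:stacky_approx} is a statement about $t$-structures and compact generators in $D_{\operatorname{qc},Z}(\mathcal{X})$, and there is no reason such categorical data should descend through inverse limits of stacks. This step would require substantial new work and is in fact unnecessary. Second, your exposition conflates two unrelated opens: the open whose complement is the $Z$ being tested in \Cref{hyp:stacky_approx}, and the open $\mathcal{U}$ appearing in the Mayer--Vietoris square used for d\'evissage. \Cref{lem:etale_nbhd_preferred_class} requires \Cref{hyp:stacky_approx} to hold for \emph{all} closed subsets of the d\'evissage open $\mathcal{U}$, not just for the particular $Z$ you started with; so the induction must be on the stack, not on the pair $(\mathcal{S},Z)$, and the phrase ``eventually reduces the problem to the trivial case $Z=\varnothing$'' does not describe a coherent induction.

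The paper sidesteps both issues by invoking \cite[Theorem~E]{Hall/Rydh:2018} directly: one defines a $2$-category $\mathbb{E}$ of suitably presented stacks over $\mathcal{S}$ (representable, separated, finitely presented, quasi-finite flat in case \eqref{item:stacks_preferred1}; finitely presented \'etale with separated diagonal in case \eqref{item:stacks_preferred2}) and lets $\mathbb{D}\subseteq\mathbb{E}$ be the objects satisfying \Cref{hyp:stacky_approx}. The three lemmas verify exactly the three hypotheses of Theorem~E (stability under open immersions, the finite-flat-affine base case, and \'etale-neighborhood gluing), whence $\mathbb{D}=\mathbb{E}\ni\mathcal{S}$. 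This packages the d\'evissage without any Noetherian reduction or hand-rolled induction.
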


\begin{proof}
    First, we prove case \eqref{item:stacks_preferred1}. So, assume $\mathcal{S}$ is a concentrated algebraic stack with quasi-finite and separated diagonal. Define $\mathbb{E}$ to be the strictly full $2$-subcategory of algebraic stacks over $\mathcal{S}$ consisting of algebraic stacks whose structure morphism $\mathcal{X} \to \mathcal{S}$ is representable by algebraic spaces, separated, finitely presented and quasi-finite flat. 

    Observe the following facts concerning $\mathbb{E}$:
    \begin{itemize}
        \item The source of every object in $\mathbb{E}$ is quasi-compact quasi-separated as any finitely presented morphism of stacks is quasi-compact quasi-separated by definition.
        \item Every morphism in $\mathbb{E}$ is representable by algebraic spaces (see e.g.\ \cite[Lemma 6.7]{DeDeyn/Lank/ManaliRahul:2025}), and so each morphism in $\mathbb{E}$ is concentrated by \cite[Lemma 2.5(3)]{Hall/Rydh:2017}. 
        In particular, as $\mathcal{S}$ is concentrated, every source of an object in $\mathbb{E}$ is concentrated.
    \end{itemize}
    
    Define $\mathbb{D}$ be the strictly full $2$-subcategory of $\mathbb{E}$ consisting of objects that satisfy \ref{hyp:stacky_approx}. 
    We invoke \cite[Theorem E]{Hall/Rydh:2018}\footnote{There is a typo in loc.\ cit.\ known to experts, but we reminder the reader: it suffices to only check (I2) for morphisms that are additionally \textit{flat}.} to show $\mathbb{E}=\mathbb{D}$. To this end, we need to verify the following:
    \begin{itemize}
        \item \label{item:preferred_cover1} if $(\mathcal{U} \to \mathcal{X})\in\mathbb{E}$ is an open immersion and $\mathcal{X}\in\mathbb{D}$, then $\mathcal{U}\in\mathbb{D}$,
        \item \label{item:preferred_cover2} if $(V \to \mathcal{X})\in \mathbb{E}$ is finite, flat and surjective with affine source, then $\mathcal{X}\in\mathbb{D}$, and
        \item \label{item:preferred_cover3} if $(\mathcal{U} \xrightarrow{i} \mathcal{X})$, $(\mathcal{Y} \xrightarrow{f} \mathcal{X})\in\mathbb{E}$, where $i$ is an open immersion and $f$ is \'{e}tale which form an \'{e}tale neighborhood, then $\mathcal{X}\in \mathbb{D}$ whenever $\mathcal{U}$, $\mathcal{Y}\in\mathbb{D}$.
    \end{itemize}
    As these are exactly \Cref{lem:open_immersion_preferred_class,lem:finite_faithfully_flat_affine_cover_preferred_class,lem:etale_nbhd_preferred_class}, the desired claim follows.

    To see \eqref{item:stacks_preferred2}, one can adapt the prior argument.
    Instead, let $\mathbb{E}$ denote the strictly full $2$-subcategory of algebraic stacks over $\mathcal{S}$ consisting of algebraic stacks whose structure morphism $\mathcal{X} \to \mathcal{S}$ is finitely presented, \'{e}tale and has separated diagonal. 
    Again, the source of any object in $\mathbb{E}$ is quasi-compact quasi-separated and moreover Deligne--Mumford (this follows by the definition of \'{e}tale morphisms of stacks \cite[\href{https://stacks.math.columbia.edu/tag/0CIL}{Tag 0CIL}]{stacks-project}) of characteristic zero.
    Moreover, any quasi-compact quasi-separated Deligne--Mumford stack of characteristic zero is concentrated by \cite[Theorem C]{Hall/Rydh:2015} as they have finite---and so affine---stabalizers (and so also any morphism between such by \cite[Lemma 2.5(4)]{Hall/Rydh:2017}).
    Thus, one can argue in a similar fashion as \eqref{item:stacks_preferred1}, again invoking \cite[Theorem E]{Hall/Rydh:2018}.
\end{proof}

Given the legwork above, we end with the following analog of \cite[Theorem 0.1]{Neeman:2022}.

\begin{theorem}
    \label{thm:stacky_neeman_bounded_t_structure}
    Let $\mathcal{X}$ be an algebraic stack as in \Cref{prop:preferred_equivalence_classes} that is additionally Noetherian and of finite Krull dimension.
    Suppose $Z$ is a closed subset of $\mathcal{X}$.
    Then $Z\subseteq\operatorname{reg}(\mathcal{X})$ if, and only if, $\operatorname{Perf}_Z (\mathcal{X})$ admits a bounded $t$-structure.
\end{theorem}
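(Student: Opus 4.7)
My plan is to follow Neeman's proof of \cite[Theorem 0.1]{Neeman:2022} closely, with \Cref{prop:preferred_equivalence_classes} serving as the technical replacement for weak approximability in the scheme setting, and \Cref{prop:closed_subset_regular_locus} converting regularity into a purely categorical comparison between $\operatorname{Perf}_Z(\mathcal{X})$ and $D^b_{\operatorname{coh},Z}(\mathcal{X})$. To set up, let $\mathcal{T} := D_{\operatorname{qc},Z}(\mathcal{X})$. Since $\mathcal{X}$ is Noetherian, the complement $|\mathcal{X}|\setminus Z$ is quasi-compact, so by \Cref{prop:preferred_equivalence_classes} the stack $\mathcal{X}$ satisfies \Cref{hyp:stacky_approx}: $\mathcal{T}$ admits a compact generator and its standard $t$-structure lies in the preferred equivalence class. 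Moreover, concentratedness of $\mathcal{X}$ and the Thomason condition combine (via the localization sequence arguments of \cite[Lemma 4.10]{Hall/Rydh:2017}) to give $\mathcal{T}^c = \operatorname{Perf}_Z(\mathcal{X})$.

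The forward direction is immediate: if $Z\subseteq\operatorname{reg}(\mathcal{X})$, then \Cref{prop:closed_subset_regular_locus} yields $\operatorname{Perf}_Z(\mathcal{X}) = D^b_{\operatorname{coh},Z}(\mathcal{X})$, and the standard $t$-structure on $\mathcal{T}$ restricts to a bounded $t$-structure on this subcategory (with heart the coherent sheaves supported on $Z$).

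For the converse, suppose $\operatorname{Perf}_Z(\mathcal{X})$ admits a bounded $t$-structure. The strategy has two steps. First, identify $\mathcal{T}^b_c = D^b_{\operatorname{coh},Z}(\mathcal{X})$; this invariant depends only on the equivalence class of the $t$-structure on $\mathcal{T}$, and for the standard $t$-structure the computation reduces (by pulling back along a smooth cover and using that $\mathcal{X}$ satisfies approximation by compacts via \cite[Corollary 5.4]{Hall/Lamarche/Lank/Peng:2025} since both hypothesis (1) and (2) of the theorem imply this) to the analogous scheme-level identification, yielding the bounded coherent complexes supported on $Z$. Second, invoke Neeman's abstract machinery from \cite[Theorem 0.1]{Neeman:2022}: given any bounded $t$-structure on $\mathcal{T}^c = \operatorname{Perf}_Z(\mathcal{X})$, the assumption of finite Krull dimension allows one to compare it with the (equivalence class of the) standard $t$-structure on $\mathcal{T}$ and conclude $\operatorname{Perf}_Z(\mathcal{X}) \subseteq \mathcal{T}^b_c = D^b_{\operatorname{coh},Z}(\mathcal{X})$. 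Since the reverse inclusion $D^b_{\operatorname{coh},Z}(\mathcal{X})\subseteq\operatorname{Perf}_Z(\mathcal{X})$ is a general feature of Neeman's setup (every object of $\mathcal{T}^b_c$ fits into a triangle with a compact and an arbitrarily coconnected term, and boundedness of the $t$-structure on $\operatorname{Perf}_Z(\mathcal{X})$ forces closure under direct summands from this triangle), we obtain $\operatorname{Perf}_Z(\mathcal{X}) = D^b_{\operatorname{coh},Z}(\mathcal{X})$. Applying \Cref{prop:closed_subset_regular_locus} again gives $Z \subseteq \operatorname{reg}(\mathcal{X})$.

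The main obstacle is step one above: ensuring that Neeman's arguments for identifying $\mathcal{T}^b_c$ and for propagating a bounded $t$-structure on $\mathcal{T}^c$ into the ambient category go through using only that the standard $t$-structure is in the preferred equivalence class, rather than the stronger hypothesis of weak approximability. The introduction signals this is feasible, so the task amounts to careful bookkeeping: locate every appeal to weak approximability in Neeman's argument and verify that the preferred-class property together with the Thomason condition and stacky approximation by compacts suffices in its place.
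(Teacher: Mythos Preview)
Your proposal is correct and follows essentially the same approach as the paper: the forward direction via \Cref{prop:closed_subset_regular_locus}, and the converse by rerunning Neeman's argument from \cite[\S 3]{Neeman:2022} with \Cref{prop:preferred_equivalence_classes} supplying the preferred-equivalence-class input (in lieu of weak approximability) and approximation by compacts (from \cite[Corollaries~5.4 \& 5.5]{Hall/Lamarche/Lank/Peng:2025}) supplying the identification $\mathcal{T}^b_c = D^b_{\operatorname{coh},Z}(\mathcal{X})$. The paper is slightly more explicit about one ingredient you leave under ``careful bookkeeping'': the stacky analogue of \cite[Lemma~3.5]{Neeman:2022} is obtained from \Cref{lem:hall_pullback_sheaf_hom}, which makes the relevant compatibility of pullback with internal Hom smooth-local and hence reducible to the scheme case.
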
 

\begin{proof}
    The forward direction follows from \Cref{prop:closed_subset_regular_locus}, whereas the converse can be shown essentially verbatim as in \cite[$\S 3$]{Neeman:2022} making use of the following observations:
    \begin{itemize}
        \item The necessary parts of \cite[Theorem 3.2]{Neeman:2022} hold in this setting. 
        Indeed, (i) and (ii) of loc.\ cit.\ follow as $\mathcal{X}$ is concentrated and satisfies the $1$-Thomason condition, the latter and  (iii) are exactly the content of \Cref{prop:preferred_equivalence_classes}.
        Furthermore, (iv) is not actually needed in \cite[$\S 3$]{Neeman:2022}---as explicitly noted in loc.\ cit. 
        \item The analogue of \cite[Lemma 3.5]{Neeman:2022} follows from \Cref{lem:hall_pullback_sheaf_hom}. Indeed, by the latter the question is smooth local and so one reduces to \cite[Lemma 3.5]{Neeman:2022}.
        \item The proof of \cite[Lemma 3.4]{Neeman:2022} only uses the above ingredients. 
        \item That the theorem follows from loc.\ cit.\ requires \cite[Theorem 3.3]{Neeman:2022}; in this setting this is simply requiring that $\mathcal{X}$ satisfies approximation by compacts.
        This holds by \cite[Corollary 5.4 \& Corollary 5.5]{Hall/Lamarche/Lank/Peng:2025}. \qedhere 
    \end{itemize}
\end{proof}

\bibliographystyle{alpha}
\bibliography{mainbib}

\end{document}